\documentclass[12pt]{amsart}

\usepackage{ljm-auth}
\newtheorem{theor}{Theorem}[section]
\newtheorem{pr}[theor]{Proposition}
\newtheorem{cor}[theor]{Corollary}

\newtheorem{de}[theor]{Definition}
\newtheorem{rem}[theor]{Remark}
\newtheorem{que}[theor]{Question}
\newtheorem{ex}[theor]{Examples}

\author{Ural Bekbaev}

\crauthor{Ural Bekbaev} 

\tit{On relation between rational and differential rational invariants of surfaces
with respect to the motion groups}
\shorttit{On relation between} 

\setcounter{page}{3}

\begin{document}
\small \begin{center}{\textit{ In the name of
Allah, the Beneficent, the Merciful.}}\end{center}

\maketit
\address{Department of Science in Engineering, Faculty of Engineering,
IIUM, 50728, KL, Malaysia $\&$ Institute for Mathematical Research (INSPEM)\\ \email: bekbaev@iium.edu.my}

\abstract{The description of invariants of surfaces with respect to the motion groups is reduced to the description of invariants of parameterized surfaces with respect to the motion groups. Existence of a commuting system of invariant partial differential operators (derivatives) and a finite system of invariants, such that any invariant of the surface is a function of these invariants and their invariant derivatives,  is shown. The offered method is applicable in more general  settings than the "Moving Frame Method" does in differential geometry.} \notes{0}{

\subclass{53A05,53A55, 12H05, 12H20}%
\keywords{differential field, differential
rational function, invariant, surface, motion group}%
}

\section{Introduction}

The fundamental role of the "Moving Frame Method" and the "Jet
spaces" approach in the differential geometry is well known  \cite{Ol1,Ol2}. Nowadays they are involved nearly in every research done on differential geometry, particularly on invariants of surfaces with respect to the motion groups \cite{Ol2, Ol3}. In the present paper we offer a pure algebraic approach to the invariants of the surfaces. Taking into account some early results \cite{B1, B2} now, roughly speaking, one can conclude that the classification problems of the surfaces can be reduced to the classification problems of parameterized surfaces. We show that the description of differential invariants of surfaces can be lessen to a description of algebraic (not differential) invariants of the motion groups.
For the curves case the more detailed results are given in \cite{B3, B4}.

Let $n,m$ be any fixed natural numbers, $H$ be any subgroup of the general linear group $GL(n,\mathbb{R})$, where $\mathbb{R}$ stands for the field of real numbers. Consider its identity representation in $\mathbb{R}^n$: $(\mathbf{c},h)\mapsto \mathbf{c}h$, where $\mathbf{c}=(c_1,c_2,...,c_n)\in \mathbb{R}^n$ is a row vector, $h\in H$. In the invariant theory the descriptions of the algebra of $H$-invariant polynomials $\mathbb{R}[\mathbf{x}]^H$, and the field of $H$-invariant rational functions $\mathbb{R}(\mathbf{x})^H$ are considered as one of the main problems, where $\mathbf{x}=(x_1,x_2,...,x_n)$ and $x_1,x_2,...,x_n$ are algebraic independent variables over $\mathbb{R}$. For many, in particular for classic, groups the corresponding descriptions are known \cite{We}, \cite{Kr}. At the same time in geometry the description of invariants of $m$-parametric surfaces (patches) $\mathbf{x}=(x_1 (t_1,...,t_m),...,x_n (t_1,...,t_m))$ in $\mathbb{R}^n$ with respect to the motion group $H$ is also one of the important problems. In this case consideration the algebra of $H$-invariant $\partial$-differential polynomials $\mathbb{R}\{\mathbf{x};\partial\}^H$, as an important object is not convenient as far as even for finite $H$ it, as a $\partial$-differential algebra over $\mathbb{R}$, may have no finite system of differential generators over $\mathbb{R}$, where $\partial$ stands for $(\frac{\partial}{\partial t_1}, . . .,\frac{\partial}{\partial t_m})$ \cite{Kh}. For the corresponding field of the $H$-invariant $\partial$-differential rational functions $\mathbb{R}\langle \mathbf{x};\partial\rangle^H$ there is no such problem. It has a finite system of differential generators over $\mathbb{R}$ and $\mathbb{R}(\mathbf{x})^H\subset \mathbb{R}\langle \mathbf{x};\partial\rangle^H$. Therefore by algebraic point of view for the parameterized surfaces (patches) this is an important object to describe.
But for the theory of surfaces one has to consider $\partial$-differential rational functions in $\mathbf{x}$ which are not only $H$-invariant but also are invariant with respect to change of parameters
$(t_1,...,t_m)$. Let $\mathbb{R}\langle \mathbf{x};\partial\rangle^{(GL^{\partial},H)}$ stand for the set of all such invariant $\partial$-differential rational functions in $\mathbf{x}$ over $\mathbb{R}$. In general this field is not invariant with respect to $\partial$ and therefore it can not be considered as a $\partial$-differential field. We will show that there exists a commuting system of differential operators $\delta_1,\delta_2,. . .,\delta_m$ of $\mathbb{R}\langle \mathbf{x};\partial\rangle$ such that $\mathbb{R}\langle \mathbf{x};\partial\rangle^{(GL^{\partial},H)}$ is a $\delta$-differential field, where $\delta=(\delta_1,\delta_2,. . .,\delta_m)$. It should be noted that even in particular cases of $H$ the existence of such a commuting system of differential operators of $\mathbb{R}\langle \mathbf{x};\partial\rangle^{(GL^{\partial},H)}$ is not stated(guaranteed) \cite{Ol2, Ol3}.

In \cite{B1,B2} we have shown that the description of the $\partial$-differential field $\mathbb{R}\langle \mathbf{x};\partial\rangle^H$ can be reduced to the description of $\mathbb{R}(\mathbf{x})^H$ type invariants. One of the main results of the present paper states that  the description of the $\delta$-differential field $\mathbb{R}\langle \mathbf{x};\partial\rangle^{(GL^{\partial},H)}$  can be reduced to the description of $\mathbb{R}\langle \mathbf{x};\partial\rangle^H$ type invariants (Theorem 3.2).

Our approach to invariants of surfaces is an algebraic one and it enables one to avoid secondary particulars in exploring invariants of surfaces and, thanks to the symmetric product of matrices, to represent many relations in matrix form which are useful in finding invariants. The Differential Geometry offers "The Moving Frame" method to describe invariants of surfaces \cite{Ol1}. The prolonged actions on the surface jet spaces are not represented in matrix form and therefore one has to work with entries. It makes finding of invariants of surfaces in higher dimensional vector spaces hardly possible. Our method works not only in higher dimensional cases but also in more general settings than the case of surfaces in geometry. We are emphasizing this method by the use of differential geometry as far as it has come out from  the corresponding geometry problems.

Some of the results of this paper have been highlighted in \cite{B2, B5}. Before we have considered the hyper surface case in \cite{B6} and finite group $H$ case in \cite{B7}.
The used in present paper the notions and results from Differential Algebra
can be found in \cite{Ko,Ka}.

The organization of the paper is as follows. In the next section we present an algebraic approach to the surface problems, introduce definitions needed, notations and consider some results which will be used in future. Section 3 is about the main results of this paper. Section 4 deals with the construction of the commuting system of invariant partial differential operators. In this section for two dimensional surfaces in $\mathbb{R}^3$ and $\mathbb{R}^4$ the examples of the commuting systems of invariant partial differential operators are presented as well.
\section{Preliminaries}
\subsection{The symmetric product.}
 Further we need a new product of matrices, denoted by $\odot$, which can be called the symmetric product of matrices. For the proofs of the main properties of this product one can see \cite{B8}.

 For a positive integer $n$ let $I_n$ stand for all $n$-tuples with nonnegative integer entries with the
 following linear order: $\beta=
 (\beta_1,\beta_2,...,\beta_n)<\alpha=(\alpha_1,\alpha_2,...,\alpha_n)$ if and only if $\vert \beta\vert < \vert \alpha\vert$ or
$\vert \beta\vert = \vert \alpha\vert$ and $\beta_1>\alpha_1$ or $\vert \beta\vert = \vert \alpha\vert$, $\beta_1=\alpha_1$ and $\beta_2>\alpha_2$, et cetera
, where $\vert \alpha\vert$ stands for
$\alpha_1+\alpha_2+...+\alpha_n $. We consider in $I_n$ component-wise addition and subtraction, for example, $\alpha +\beta=(\alpha_1+\beta_1,...,\alpha_n+\beta_n)$.  We write $\beta \ll \alpha$ if $\beta_i \leq \alpha_i$
for all $i=1,2,...,n$, $\left(\begin{array}{c}
  \alpha \\
  \beta \\
\end{array}\right)$ stands for $\frac{\alpha!}{\beta!
(\alpha -\beta)!}$, $\alpha !=\alpha_1!\alpha_2!...\alpha_n!$ We consider also the following direct (or tensor) product:
$I_n \times I_m \longrightarrow I_{n+m}$. If $\alpha\in I_n$, $\beta\in I_m$ then $\alpha \times \beta= (\alpha, \beta)\in I_{n+m}$

 Let $n$, $n'$ and $n''$ be any fixed nonnegative integers (In the case of $n=0$ it is assumed that $I_n=  \{0\}$).

 For any  nonnegative
integer numbers $p',p$ let  $M_{n',n}(p',p;F)=M(p',p;F)$ stand for
all $"p'\times p"$ size matrices $A=(A^{\alpha'}_{\alpha})_{\vert
\alpha' \vert=p', \vert \alpha\vert=p}$ ($\alpha'$ presents
row, $\alpha$ presents column and $\alpha'\in I_{n'},\alpha\in
I_{n}$). The ordinary size of a such matrix
is $\left(\begin{array}{c}
  p'+n'-1 \\
  n'-1 \\
\end{array}\right)\times\left(\begin{array}{c}
  p+n-1 \\
  n-1 \\
\end{array}\right)$. Over such kind matrices in addition to the ordinary sum and
product of matrices we consider the following "product" $\odot$ as well:

\begin{de}  If $A\in M(p',p;F)$ and $B\in M(q',q;F)$ then
$A\odot B=C\in M(p'+q', p+q;F)$  such that for any
$\vert\alpha\vert=p+q$, $\vert\alpha'\vert=p'+q'$, where
$\alpha\in I_n,\alpha'\in I_{n'}$,
\[C^{\alpha'}_{\alpha}=\sum_{\beta,\beta'}\left(\begin{array}{c}
  \alpha \\
  \beta \\
\end{array}\right)
    A^{\beta'}_{\beta}B^{\alpha'-\beta'}_{\alpha-\beta},\]
where the sum is taken over all $\beta'\in I_{n'},\beta\in I_{n}$, for which $\vert
\beta'\vert=p'$, $\vert \beta\vert=p$, $\beta'\ll \alpha'$ and
$\beta\ll \alpha$.\end{de}

\begin{pr} For the above defined product the following
are true.

1. $A\odot B=B\odot A$.

2. $(A+B)\odot C=A\odot C+ B\odot C$.

3. $(A\odot B)\odot C=A\odot (B\odot C)$.

4. $ (\lambda A)\odot B=\lambda (A\odot B)$ for any
$\lambda\in F$.

5. $A\odot B=0$ if and only
    if $A=0$ or $B=0$.

6. $(A\odot V)B=(AB)\odot V$, where $V$ stands for any column matrix.

7. $A(B\odot H)=(AB)\odot H$, where $H$ stands for any row matrix. \end{pr}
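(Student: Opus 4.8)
\textit{Idea of proof.}\ The plan is to realize the operation $\odot$ as ordinary multiplication in a polynomial ring, which disposes of items 1--5 at one stroke, and then to verify 6 and 7 by a short computation that exploits the fact that a column (respectively row) matrix carries only the single column (respectively row) multi-index $0$ of total degree zero.

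Assume first that $F$ has characteristic zero, which is the case in every situation considered in this paper ($F$ being $\mathbb{R}$ or an extension field of it). I would introduce independent variables $\mathbf{s}=(s_1,\dots,s_{n'})$ and $\mathbf{t}=(t_1,\dots,t_n)$ and attach to each $A=(A^{\alpha'}_{\alpha})\in M(p',p;F)$ the polynomial
\[
\widehat{A}(\mathbf{s},\mathbf{t})=\sum_{|\alpha'|=p',\ |\alpha|=p}A^{\alpha'}_{\alpha}\,\mathbf{s}^{\alpha'}\,\frac{\mathbf{t}^{\alpha}}{\alpha!},
\]
homogeneous of degree $p'$ in $\mathbf{s}$ and of degree $p$ in $\mathbf{t}$. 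The assignment $A\mapsto\widehat{A}$ is an $F$-linear bijection of $M(p',p;F)$ onto the space of such bihomogeneous polynomials, hence, after summing over all $p'$ and $p$, an isomorphism of bigraded $F$-modules $\bigoplus_{p',p}M(p',p;F)\cong F[\mathbf{s},\mathbf{t}]$. The single computation to carry out is that it turns $\odot$ into multiplication: substituting the defining formula for $A\odot B$ and using $\binom{\alpha}{\beta}\,\mathbf{t}^{\alpha}/\alpha!=(\mathbf{t}^{\beta}/\beta!)(\mathbf{t}^{\alpha-\beta}/(\alpha-\beta)!)$ together with $\mathbf{s}^{\alpha'}=\mathbf{s}^{\beta'}\mathbf{s}^{\alpha'-\beta'}$, and then re-indexing the double sum by $\gamma=\alpha-\beta$, $\gamma'=\alpha'-\beta'$, one obtains $\widehat{A\odot B}=\widehat{A}\,\widehat{B}$. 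Granting this, items 1, 2 and 4 are just commutativity, distributivity and $F$-bilinearity of polynomial multiplication, item 3 is its associativity, and item 5 follows because $F[\mathbf{s},\mathbf{t}]$ is an integral domain and $A\mapsto\widehat{A}$ is injective. (Over an arbitrary commutative ring $F$ one runs the same argument in the divided-power polynomial algebra, or else checks 1--5 directly from the defining formula.)

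For item 6, write the column matrix as $V\in M_{n',n}(q',0;F)$; since its only column index is $0$, in the defining sum for $A\odot V$ the factor $V^{\alpha'-\beta'}_{\alpha-\beta}$ is nonzero only when $\beta=\alpha$, the weight $\binom{\alpha}{\alpha}=1$ drops out, and $(A\odot V)^{\alpha'}_{\alpha}=\sum_{\beta'}A^{\beta'}_{\alpha}V^{\alpha'-\beta'}$. Right multiplication by $B$ affects only the column index $\alpha$, so it commutes with this operation; interchanging the two finite summations then gives $(A\odot V)B=(AB)\odot V$. Item 7 is the mirror statement for a row matrix $H\in M_{n',n}(0,r;F)$: now the only row index of $H$ being $0$ forces $\beta'=\alpha'$ in the defining sum for $B\odot H$, the binomial weight on the column index surviving unchanged on both sides, and the same interchange of sums yields $A(B\odot H)=(AB)\odot H$.

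The step that calls for the most care is the multi-index bookkeeping behind $\widehat{A\odot B}=\widehat{A}\,\widehat{B}$: one must check that, as $(\alpha,\alpha')$ runs over all indices with $|\alpha|=p+q$, $|\alpha'|=p'+q'$ and $(\beta,\beta')$ over the set prescribed in the defining formula, the pairs $(\beta,\beta')$ and $(\alpha-\beta,\alpha'-\beta')$ range independently over the full index sets of the two factors, so that $\gamma=\alpha-\beta$, $\gamma'=\alpha'-\beta'$ is a bijection onto the index set of the product. Everything past that point is formal, and no hypothesis on $F$ beyond characteristic zero enters.
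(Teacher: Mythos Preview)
Your argument is correct. The polynomial--ring realization $A\mapsto\widehat{A}(\mathbf{s},\mathbf{t})=\sum A^{\alpha'}_{\alpha}\mathbf{s}^{\alpha'}\mathbf{t}^{\alpha}/\alpha!$ does convert $\odot$ into ordinary multiplication (the asymmetry between the binomial weight on the column index and its absence on the row index is exactly matched by the asymmetry between $\mathbf{t}^{\alpha}/\alpha!$ and $\mathbf{s}^{\alpha'}$), so items 1--5 follow at once from the corresponding facts about $F[\mathbf{s},\mathbf{t}]$. Your direct verifications of 6 and 7 are also right: the key point, that a column (row) matrix forces $\beta=\alpha$ (respectively $\beta'=\alpha'$) in the defining sum, is correctly identified, and the remaining interchange of finite sums is routine.

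As for comparison with the paper: the paper does not prove this proposition at all; it simply refers the reader to \cite{B8} for the proofs. So your write-up supplies what the paper omits. The generating-function viewpoint you use is in fact the natural conceptual explanation of the $\odot$ product (it is essentially the product on the symmetric algebra in the $\mathbf{s}$-variables tensored with the divided-power algebra in the $\mathbf{t}$-variables), and it makes item 5 transparent without any induction or leading-term argument. One small remark: your parenthetical about arbitrary commutative rings is accurate---replacing $F[\mathbf{t}]$ by the divided-power algebra removes the characteristic-zero hypothesis---but since every field in this paper contains $\mathbb{R}$, the simpler version you wrote suffices here.
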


  In future $A^{\odot m}$ means the $m$-th power of matrix $A$ with
respect to the $\odot$ product.

\begin{pr} If $\mathbf{r}=(r_1,r_2,...,r_{n})\in M(0,1;F)$,
$\mathbf{c}=(c_1,c_2,...,c_{n'})\in M(1,0;F)$, $h\in M(1,k;F)$ then
\[(\mathbf{r}^{\odot m})^{0}_{\alpha}=m!\mathbf{r}^{\alpha}, \hspace{1cm} (\mathbf{c}^{\odot m})^{\alpha'}_0=\left(\begin{array}{c}
  m \\
  \alpha'\\
\end{array}\right)\mathbf{c}^{\alpha'},\hspace{1cm}  (\mathbf{r}h)^{\odot m}=\frac{\mathbf{r}^{\odot m}}{m!}h^{\odot m},\] where $\mathbf{r}^{\alpha}$ stands for
$r_1^{\alpha_1}r_2^{\alpha_2}...r_n^{\alpha_n}$. For any square matrix $h\in M_{n,n}(1,1;F)$ and natural $k$ the equality \[\det(\frac{h^{\odot k}}{k!})=\det(h)^{\left(\begin{array}{c}
  n+k-1 \\
  n \\
\end{array}\right)}\] holds. \end{pr}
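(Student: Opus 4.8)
The plan is to dispatch the three displayed equalities by induction (on $m$, resp.\ on $k$) directly from the definition of $\odot$, and then to reduce the determinant identity to the case of a diagonal $h$. For the first equality, induct on $m$, the case $m=1$ being the definition of $\mathbf{r}$. Writing $e_i\in I_n$ for the multi-index with a single $1$ in the $i$-th slot, the recursion $\mathbf{r}^{\odot(m+1)}=\mathbf{r}^{\odot m}\odot\mathbf{r}$ gives, straight from the definition of $\odot$,
\[(\mathbf{r}^{\odot(m+1)})^{0}_{\alpha}=\sum_{i:\,\alpha_i\geq 1}\binom{\alpha}{\alpha-e_i}(\mathbf{r}^{\odot m})^{0}_{\alpha-e_i}\,r_i=\Bigl(\sum_i\alpha_i\Bigr)m!\,\mathbf{r}^{\alpha}=(m+1)!\,\mathbf{r}^{\alpha},\]
where I used $\binom{\alpha}{\alpha-e_i}=\alpha_i$, the inductive hypothesis, $\mathbf{r}^{\alpha-e_i}r_i=\mathbf{r}^{\alpha}$, and $\sum_i\alpha_i=|\alpha|=m+1$.

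The second equality follows from the same induction: reading $\binom{m}{\alpha'}$ as the multinomial coefficient $m!/\alpha'!$ (legitimate since $|\alpha'|=m$), the inductive step amounts to $\sum_{i:\,\alpha'_i\geq1}\binom{m}{\alpha'-e_i}=\binom{m+1}{\alpha'}$, which in turn follows from $(\alpha'-e_i)!=\alpha'!/\alpha'_i$ and $\sum_i\alpha'_i=m+1$. For the third equality I would put $\mathbf{s}=\mathbf{r}h$ and compare entries: by the first equality $(\mathbf{s}^{\odot m})^{0}_{\alpha}=m!\,\mathbf{s}^{\alpha}$, while $\bigl(\tfrac{1}{m!}\mathbf{r}^{\odot m}h^{\odot m}\bigr)^{0}_{\alpha}=\sum_{|\gamma|=m}\mathbf{r}^{\gamma}(h^{\odot m})^{\gamma}_{\alpha}$, and one checks (using the explicit entries of $h^{\odot m}$, obtained by an induction like the one above or taken from \cite{B8}) that the latter is exactly the expansion of $m!\prod_i\bigl(\sum_j r_j h_{ji}\bigr)^{\alpha_i}$ in powers of $r_1,\dots,r_n$; alternatively one may iterate parts 6 and 7 of Proposition 2.2.

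For the determinant identity I would first take $h=\mathrm{diag}(\lambda_1,\dots,\lambda_n)$: an induction like the one used for the first equality shows that $h^{\odot k}$ is then diagonal with $(h^{\odot k})^{\alpha}_{\alpha}=k!\,\lambda^{\alpha}$ for $|\alpha|=k$, so that
\[\det\!\Bigl(\tfrac{h^{\odot k}}{k!}\Bigr)=\prod_{|\alpha|=k}\lambda^{\alpha}=\prod_{i=1}^{n}\lambda_i^{\,N},\qquad N:=\sum_{|\alpha|=k}\alpha_i .\]
Here $N$ is independent of $i$ by the symmetry of $\{\alpha\in I_n:|\alpha|=k\}$ under permutations of coordinates, and $nN=\sum_{|\alpha|=k}|\alpha|=k\binom{n+k-1}{n-1}$, whence $N=\tfrac{k}{n}\binom{n+k-1}{n-1}=\binom{n+k-1}{n}$, which is the claimed exponent (and $\det h=\lambda_1\cdots\lambda_n$). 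For an arbitrary $h$ I would note that both sides of the asserted equality are polynomials in the entries of $h$, so it is enough to verify it over $\overline{F}$ on the Zariski-dense set of matrices with pairwise distinct eigenvalues; for such $h=gdg^{-1}$ the multiplicativity of $A\mapsto\tfrac{1}{k!}A^{\odot k}$ (the matrix analogue of the third equality; see \cite{B8}), together with $\tfrac{1}{k!}I^{\odot k}=I$, gives $\tfrac{1}{k!}h^{\odot k}=\bigl(\tfrac{1}{k!}g^{\odot k}\bigr)\bigl(\tfrac{1}{k!}d^{\odot k}\bigr)\bigl(\tfrac{1}{k!}g^{\odot k}\bigr)^{-1}$, and $\det$, being conjugation-invariant, reduces this case to the diagonal one.

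The multi-index bookkeeping in the inductions is routine, as is the symmetry-and-counting step giving the exponent $\binom{n+k-1}{n}$. The one place where a genuine idea enters is the passage from diagonal to general $h$ in the determinant identity, i.e.\ the conjugation-equivariance of $A\mapsto\tfrac{1}{k!}A^{\odot k}$, equivalently $\tfrac{1}{k!}(AB)^{\odot k}=\bigl(\tfrac{1}{k!}A^{\odot k}\bigr)\bigl(\tfrac{1}{k!}B^{\odot k}\bigr)$; should one wish to avoid quoting it, it can be bypassed by remarking that $h\mapsto\det\bigl(\tfrac{1}{k!}h^{\odot k}\bigr)$ is a multiplicative polynomial map on the monoid of $n\times n$ matrices, hence a power of $\det$, with the exponent determined by the diagonal computation above.
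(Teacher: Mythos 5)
The paper does not actually prove this proposition: it is stated without proof, with the remark at the start of Section 2.1 that ``for the proofs of the main properties of this product one can see [B8]''. So there is no in-paper argument to compare against, and your write-up is essentially a self-contained replacement for that citation. Your inductions for the first two equalities are correct (the key identities $\binom{\alpha}{\alpha-e_i}=\alpha_i$ and $\sum_i\alpha_i=|\alpha|$ are exactly what is needed, and reading $\binom{m}{\alpha'}$ as the multinomial coefficient $m!/\alpha'!$ is the right interpretation of the paper's notation), and your reduction of the determinant identity to the diagonal case, with the exponent obtained from $nN=\sum_{|\alpha|=k}|\alpha|=k\binom{n+k-1}{n-1}$, is clean and correct. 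The one place where you are leaning on something not fully established is the third equality and, more importantly, its square-matrix analogue $\tfrac{1}{k!}(AB)^{\odot k}=\bigl(\tfrac{1}{k!}A^{\odot k}\bigr)\bigl(\tfrac{1}{k!}B^{\odot k}\bigr)$, which your density/conjugation step needs; you acknowledge this and cite [B8], which is no worse than what the paper itself does, and your closing remark (that $h\mapsto\det(\tfrac{1}{k!}h^{\odot k})$ is a multiplicative polynomial map, hence a power of $\det$ whose exponent is fixed by the diagonal computation) gives a legitimate way to finish once that multiplicativity is in hand. Since $F$ has characteristic zero throughout the paper, the divisions by $m!$, $k!$ and the Zariski-density argument over an infinite field are unproblematic. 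In short: correct, and more informative than the paper, which delegates the whole proposition to an external reference.
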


The following result has been presented in \cite{B9}.

 \begin{theor}Let $\mathbf{A}$ be any associative algebra, with $1$, over a field of zero characteristic and let $\mathbf{C}$ stand for the center of $\mathbf{A}$. If $\partial =(\partial^1,\partial^2,...,\partial^{n'})$, $\delta =(\delta^1, \delta^2,..., \delta^n)$ are column vectors of commuting system of differential operators of the algebra $\mathbf{A}$ for which $\partial = g\delta$, where $g^i_j\in \mathbf{C}$, $\partial^kg^i_j=\partial^ig^k_j$ for all $i,k=1,2,...,n'$, $j=1,2,...,n$ then for any natural $m$ the following equality is true
$\frac{\partial^{\odot m}}{m!}=$
\[ \sum_{k=1}^m k!\sum_{ |\alpha |=k,  \|\alpha\|=m } \frac{(\frac{\partial^{\odot 0}}{0!}\odot g)^{\odot \alpha_0}}{\alpha_0!}\odot
\frac{(\frac{\partial^{\odot 1}}{1!}\odot g)^{\odot \alpha_1}}{\alpha_1!}\odot ... \odot \frac{(\frac{\partial^{\odot m-1}}{m!}\odot g)^{\odot \alpha_{m-1}}}{\alpha_{m-1}!}\frac{\delta^{\odot k}}{k!},\]
where $\frac{\partial^{\odot 0}}{0!}\odot g=1\odot g=g$, $\alpha =(\alpha_0, \alpha_1,. . ., \alpha_{m-1})$, $\|\alpha\|= \alpha_0+2\alpha_1+. . .+m\alpha_{m-1}.$ \end{theor}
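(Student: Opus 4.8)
The plan is to prove the identity by induction on $m$, treating it as a statement about how the $\odot$-powers of the "composite" differential operator $\partial$ decompose in terms of the $\odot$-powers of the "base" operator $\delta$ when the change-of-variables matrix $g$ lies in the center $\mathbf{C}$ and satisfies the integrability condition $\partial^k g^i_j = \partial^i g^k_j$. First I would record the base case: for $m=1$ the asserted formula collapses to $\partial = g\,\delta$, which is the hypothesis (the only admissible $\alpha$ is $\alpha=(1)$ with $\|\alpha\|=1$, $|\alpha|=1$, giving the single term $g\,\delta^{\odot 1}/1!$). I would also isolate, as a separate lemma, the first-order "Leibniz-type" behaviour of $\odot$ under a single operator $\partial^i$ acting on a product $A\odot B$ of matrices with entries in $\mathbf{A}$, namely that $\partial^i$ is a derivation with respect to $\odot$; this follows by differentiating the binomial-coefficient sum in the Definition entrywise and using that $\partial^i$ is a derivation on $\mathbf{A}$. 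Combined with parts 6 and 7 of the first Proposition (which let columns/rows pass through ordinary matrix multiplication) this is the computational engine of the induction.

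The inductive step is the heart of the argument. Assuming the formula for $m$, I would apply the operator $\partial$ once more in the guise of $\frac{\partial^{\odot (m+1)}}{(m+1)!} = \frac{1}{m+1}\,\frac{\partial^{\odot m}}{m!}\odot \partial$ (using commutativity of the $\partial^i$, so that the symmetric $\odot$-power is built up one factor at a time, the extra factor $\frac{1}{m+1}$ accounting for the symmetrization). I then substitute $\partial = g\delta$ in the last factor and push the resulting $\delta$ to the right through all the earlier factors via parts 6–7 of the Proposition, at the same time letting each $\delta^j$ act, by the Leibniz lemma, across the product $\frac{(\frac{\partial^{\odot r}}{r!}\odot g)^{\odot \alpha_r}}{\alpha_r!}$. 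Acting on the factor $\frac{\partial^{\odot r}}{r!}\odot g$ produces two kinds of terms: a term where $\delta^j$ hits $g$, which by the integrability hypothesis rewrites as $\partial$ hitting something and so, after one more use of $\partial=g\delta$, raises an index $r\to r$ while shuffling the multiplicities; and a term where $\delta^j$ hits $\frac{\partial^{\odot r}}{r!}$, which by the (already established) lower-order instances of the theorem converts $\frac{\partial^{\odot r}}{r!}$ into a sum over $\frac{\partial^{\odot (r+1)}}{(r+1)!}$ pieces, raising $r\to r+1$. Collecting all contributions, the index $k$ (the $\odot$-degree in $\delta$) goes up by one in exactly one term and stays put in the others, and the partition-type bookkeeping parameters $\alpha=(\alpha_0,\dots,\alpha_{m-1})$ are updated to $\alpha'=(\alpha'_0,\dots,\alpha'_m)$ with $|\alpha'|$ and $\|\alpha'\|$ shifted by the right amounts.

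What then remains is purely combinatorial: to check that, after this expansion, the coefficients reorganize into exactly the claimed sum $\sum_{k=1}^{m+1} k!\sum_{|\alpha|=k,\|\alpha\|=m+1}(\cdots)$. Concretely one must verify the recursion satisfied by the multinomial-type coefficients $\frac{1}{\alpha_0!\alpha_1!\cdots}$ together with the factor $k!$ and the overall $\frac{1}{m+1}$: the three moves described above (bump $k$, hit $g$, raise an index) correspond precisely to the three ways a constrained partition of $m+1$ with $|\alpha|=k$ arises from one of $m$, and summing the corresponding coefficient identities must reproduce $k!\,/\prod\alpha'_r!$. I expect this coefficient-matching to be the main obstacle: the analytic/algebraic manipulations (derivation property of $\odot$, moving $\delta$ to the right, invoking integrability) are routine given the Propositions, but organizing the triple sum so that the weights telescope correctly — in particular keeping track of the symmetrization factors introduced each time a new $\partial$ or $\delta$ factor is appended — is delicate. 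A clean way to handle it is to phrase the whole identity in a generating-function form (a formal exponential $\sum_m \frac{\partial^{\odot m}}{m!}z^{\odot m}$ versus $\sum_k \frac{\delta^{\odot k}}{k!}w^{\odot k}$ with the substitution encoding $\partial = g\delta$ and its prolongations) so that the combinatorial identity becomes the statement that two formal series agree; I would fall back on that reformulation if the direct index chase becomes unwieldy.
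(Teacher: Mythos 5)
First, a point of comparison that you could not have known: the paper does not prove this theorem at all --- it is quoted from the conference reference [B9] (``A formula for higher order partial derivatives''), so there is no in-paper proof to measure your argument against. Judged on its own merits, your outline (induction on $m$, building $\frac{\partial^{\odot(m+1)}}{(m+1)!}$ from $\frac{\partial^{\odot m}}{m!}$ by appending one factor, Leibniz for $\odot$, then a Fa\`a di Bruno--type recursion on the coefficients $k!/\prod_r\alpha_r!$) is the natural strategy, and the base case is handled correctly. But there are two genuine gaps.

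The first is a step that fails as written. In the inductive step you substitute $\partial=g\delta$ into the newly appended factor and then push $\delta^j$ through the coefficient factors $\frac{\partial^{\odot r}}{r!}\odot g$, claiming that the resulting term ``where $\delta^j$ hits $g$'' can be rewritten ``by the integrability hypothesis'' as a $\partial$-derivative of $g$. The hypothesis $\partial^k g^i_j=\partial^i g^k_j$ only relates different $\partial$-derivatives of $g$ to one another; it gives no way to express $\delta^j g$ in terms of $\partial$-derivatives of $g$. Indeed $g$ is an $n'\times n$ matrix and need not be square, let alone invertible, so $\delta$ cannot be solved for in terms of $\partial$ at all; even in the square invertible case you would introduce $g^{-1}$ factors that do not appear in the target formula. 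The repair is to keep the new operator as $\partial^i$ (outermost) while it Leibnizes across the coefficients --- so that differentiating $\frac{\partial^{\odot r}}{r!}\odot g$ directly produces entries of $\frac{\partial^{\odot(r+1)}}{(r+1)!}\odot g$, with the integrability condition needed only to symmetrize the $r=0$ case --- and to convert $\partial^i=\sum_j g^i_j\delta^j$ only when the operator finally reaches $\frac{\delta^{\odot k}}{k!}$ on the far right. (Your appeal to ``lower-order instances of the theorem'' to raise $r$ to $r+1$ is likewise unnecessary and not what happens.) The second gap is that the actual content of the identity --- verifying that the three moves (raise $k$ by one; raise the order of one $g$-derivative; adjust the multiplicities $\alpha_r$) reproduce exactly the weights $k!\,/\,\alpha_0!\alpha_1!\cdots\alpha_m!$ after dividing by $m+1$ --- is deferred with the acknowledgement that it is ``the main obstacle.'' That recursion is the theorem; without it the proposal is a plan, not a proof.
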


\subsection{The surface invariants by an algebraic point of view.}
Now let us consider the invariants of surfaces by the following algebraic point of view. Let $n$, $m$  be natural numbers and $H$ be a subgroup of $ GL(n,\mathbb{R})$, $G= Dif(\mathbf{B})$ be the group of
diffeomorphisms of the open unit ball $\mathbf{B}\subset \mathbb{R}^{m}$,
$\mathbf{u}:\mathbf{B}\rightarrow \mathbb{R}^{n}$ be a surface, where $\mathbf{u}$ is considered to be
infinitely smooth.

A function $f^{\partial}(\mathbf{u}(\mathbf{t}))$ of $u(\mathbf{t})=(u_1(\mathbf{t}),. . ., u_{n}(\mathbf{t}))$
and its finite number of derivatives relative to $\partial_1=
\frac{\partial}{\partial t_1}, . .
.,\partial_m=\frac{\partial}{\partial t_m}$ is said to be
invariant(more exactly, $ (G,H)$- invariant) if the equality
\[f^{\partial}(\mathbf{u}(\mathbf{t}))= f^{\delta}(\mathbf{u}(\mathbf{s}(\mathbf{t}))h)\] is valid for
any $\mathbf{s}\in G$, $h\in H$ and $\mathbf{t}\in \mathbf{B}$,
where $\mathbf{u}(\mathbf{t})$ is the row vector with coordinates
$u_1(\mathbf{t}),. . ., u_{n}(\mathbf{t}))$, $\mathbf{ s}(\mathbf{t})= (s_1(\mathbf{t}),. . .,s_m(\mathbf{t}))$,
$\delta_i= \frac{\partial}{\partial s_i}$.

Let $\mathbf{t}$ run $\mathbf{B}$ and $F= C^{\infty}(\mathbf{B}, \mathbb{R})$ be the differential ring
of infinitely smooth functions relative to differential operators
$\partial_1= \frac{\partial}{\partial t_1}, . .
.,\partial_m=\frac{\partial}{\partial t_m}$. The constant ring of
this differential ring is $\mathbb{R}$ i.e.  \[\mathbb{R}= \{a\in F: \partial_i a=
0\quad \mbox{at}\quad i=1,2,...,m \}.\] Every infinitely
smooth surface $\mathbf{u}:\mathbf{B}\rightarrow \mathbb{R}^{n}$ can be considered as an
element of the differential module $(F^{n};\partial_1, \partial_2,. .
.,\partial_m)$, where $\partial_i= \frac{\partial}{\partial t_i}$
acts on elements of $F^{n}$ coordinate-wisely. If elements of this
module are considered as the row vectors the above transformations,
involved in definition of invariant function, look like $\mathbf{u}=(u_1,.
. .,u_{n})\mapsto \mathbf{u}h,\quad \partial \mapsto g^{-1}\partial$
as far as \[\frac{\partial}{\partial t_i}=
\sum_{j=1}^{m}\frac{\partial s_j(\mathbf{t})}{\partial
t_i}\frac{\partial}{\partial s_j(\mathbf{t})},\]  where $g$ is the matrix with
the elements  $g^i_j=\frac{\partial s_j(\mathbf{t})}{\partial t_i}$
 at $i,j=1,2,...,m$, $\partial$  is the column vector with the "coordinates" $\frac{\partial}{\partial
t_1},. . .,\frac{\partial}{\partial t_m}$ , $h\in H$, $\mathbf{s}\in
G$. Moreover $\partial_ig^j_k= \partial_jg^i_k\ \mbox{for} \
i,j,k =1,2,...,m.$
Therefore for any differential field $(F;\partial_1, \partial_2,.
. .,\partial_m)$, i.e. $F$ is a field and $\partial_1,
\partial_2,. . .,\partial_m$ is a given commuting with each other
system of differential operators on $F$, one can consider the
transformations
\[\mathbf{u}=(u_1,. . .,u_{n})\mapsto \mathbf{u}h,\qquad \partial
\mapsto g^{-1}\partial,\] where $\mathbf{u} \in F^n,$ $h\in H$ is a
given subgroup of $ GL(n,C)$, $g\in
GL^{\partial}(m,F)$, \[GL^{\partial}(m,F)= \{ g\in  GL(m,F):
\partial_ig^j_k= \partial_jg^i_k\ \mbox{for} \ i,j,k=1,2,...,m\},\]
$\partial$ stands for the column-vector with the "coordinates"
$\partial_1,. . .,\partial_m$, and $C$ is the constant field of
$(F;\partial )$ i.e.
\[C= \{a\in F: \partial_i a= 0\quad \mbox{at}\quad
i=1,2,...,m \}.\]

It should be noted that for any $g \in GL^{\partial}(m,F)$ the
differential operators $\delta_1,\delta_2,. . .,\delta_m$, where
$\delta= g^{-1}\partial$, also commute with each other. So for any
$g\in GL^{\partial}(m,F)$ one can consider the differential field
$(F,\delta)$, where $\delta= g^{-1}\partial$. This transformation
is an  analogue of gauge transformations for the differential
field $(F;\partial)$.

In general case, the set $GL^{\partial}(m,F)$ is not
a group with respect to the ordinary product of matrices as far as
it is not closed with respect to this product. But by the use of
it a natural groupoid can be constructed with the base
$\{g^{-1}\partial :  g\in GL^{\partial}(m,F)\}$ \cite{W}.

Let $x_1,. . .,x_{n}$ be differential algebraic
independent variables over $F$ and $\mathbf{x}$ stand for the row vector
with coordinates $x_1,. . .,x_{n}$. We use the following notations
: $C[\mathbf{x}]$ the ring of polynomials in $x_1,. . .,x_{n}$ (over
$C$), $C(\mathbf{x})$ the field of rational functions in $\mathbf{x}$, $C\{
\mathbf{x};\partial \}$ the ring of $\partial$-differential polynomial
functions in $\mathbf{x}$ and $C\langle \mathbf{x};\partial \rangle$ is the field of
$\partial$-differential rational functions in $\mathbf{x}$ over $C$.

\begin{de} An element $f^{\partial}\langle
\mathbf{x}\rangle \in C\langle \mathbf{x}; \partial\rangle$ is said to be
 $(GL^{\partial}(m,F),H)$- invariant $(GL^{\partial}(m,F)$- invariant; $H$- invariant)
 if the equality
\[f^{g{-1}\partial}\langle \mathbf{x}h\rangle = f^{\partial}\langle
\mathbf{x}\rangle \] \[(\mbox{respectively,} f^{g{-1}\partial}\langle \mathbf{x}\rangle =
f^{\partial}\langle \mathbf{x}\rangle; f^{\partial}\langle \mathbf{x}h\rangle =
f^{\partial}\langle \mathbf{x}\rangle )\] is valid for any $g\in
GL^{\partial}(m,F),\ h\in H$.\end{de}

Let $C\langle \mathbf{x};
\partial\rangle^{(GL^{\partial}(m,F),H)}$  ($C\langle \mathbf{x}; \partial\rangle^{GL^{\partial}(m,F)}$,
$C\langle \mathbf{x}; \partial \rangle^H$ ) stand for the set of all such
$(GL^{\partial}(m,F),H)$- invariant (respectively,
$GL^{\partial}(m,F)$- invariant, $H$- invariant) elements of $
C\langle \mathbf{x};\partial\rangle$.

\begin{pr} If the system of differential
operators $\partial_1,. . .,\partial_m$ is linear independent over
$F$ then the differential operators $\delta_1,. .
.,\delta_m$, where $\delta= g^{-1}\partial, g\in GL(m,F)$, commute
with each other if and only if $g\in GL^{\partial}(m,F)$.
\end{pr}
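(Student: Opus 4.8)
The plan is to package both implications into one identity relating the commutator $[\partial_i,\partial_k]$ to the commutators $[\delta_a,\delta_c]$ and to the ``integrability defect'' $\partial_i(g^k_c)-\partial_k(g^i_c)$ of $g$, and then to read off each direction by annihilating one side of it. First I would fix conventions: write $g=(g^i_j)$ and regard $\partial$ and $\delta$ as column vectors, so that $\delta=g^{-1}\partial$ is the same as $\partial=g\delta$, i.e.\ $\partial_i=\sum_{a=1}^{m}g^i_a\delta_a$ as operators, for every $i$. Then I would expand $[\partial_i,\partial_k]=\bigl[\sum_a g^i_a\delta_a,\ \sum_c g^k_c\delta_c\bigr]$ by bilinearity of the bracket together with the Leibniz rule $[fD,hE]=fh[D,E]+f\,D(h)\,E-h\,E(f)\,D$ for derivations $D,E$ and $f,h\in F$; after regrouping and using $\sum_a g^i_a\delta_a=\partial_i$ this yields
\[ [\partial_i,\partial_k]=\sum_{c=1}^{m}\bigl(\partial_i(g^k_c)-\partial_k(g^i_c)\bigr)\delta_c+\sum_{a,c=1}^{m}g^i_a g^k_c\,[\delta_a,\delta_c]. \]
Since $\partial_1,\dots,\partial_m$ commute the left side vanishes, so for all $i,k$
\[ \sum_{c=1}^{m}\bigl(\partial_i(g^k_c)-\partial_k(g^i_c)\bigr)\delta_c=-\sum_{a,c=1}^{m}g^i_a g^k_c\,[\delta_a,\delta_c].\qquad(\star) \]

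From $(\star)$ both halves are short. If $g\in GL^{\partial}(m,F)$, the left side of $(\star)$ is zero, so $\sum_{a,c}g^i_a g^k_c[\delta_a,\delta_c]=0$ for all $i,k$; writing $(b^i_j)=g^{-1}$, multiplying the $(i,k)$-equation by $b^p_i$ and summing over $i$ collapses the $a$-sum (because $\sum_i b^p_i g^i_a$ is $1$ for $a=p$ and $0$ otherwise) to $\sum_c g^k_c[\delta_p,\delta_c]=0$, and repeating with $b^q_k$ gives $[\delta_p,\delta_q]=0$; note this half needs no independence hypothesis, matching the remark preceding the statement. Conversely, if the $\delta_i$ commute the right side of $(\star)$ is zero, so $\sum_c\bigl(\partial_i(g^k_c)-\partial_k(g^i_c)\bigr)\delta_c=0$ for all $i,k$. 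Here I would first observe that $\delta_1,\dots,\delta_m$ are themselves $F$-linearly independent: from $\sum_c f_c\delta_c=0$ with $f_c\in F$ and $\delta_c=\sum_d b^c_d\partial_d$ one gets $\sum_d\bigl(\sum_c f_c b^c_d\bigr)\partial_d=0$, hence $\sum_c f_c b^c_d=0$ for all $d$ by independence of the $\partial_d$, and then $f_c=0$ for all $c$ since $g^{-1}$ is invertible. Therefore $\partial_i(g^k_c)-\partial_k(g^i_c)=0$ for all $i,k,c$, i.e.\ $g\in GL^{\partial}(m,F)$.

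I do not expect a real obstacle here: the substance is the derivation of $(\star)$, which is a mechanical use of the Leibniz rule, and the only place demanding care is the transfer of $F$-linear independence from $\partial_1,\dots,\partial_m$ to $\delta_1,\dots,\delta_m$. That transfer relies on invertibility of $g$, which is exactly where the hypothesis of the proposition enters, and it is needed only for the ``only if'' direction.
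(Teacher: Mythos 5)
Your proof is correct and follows essentially the same route as the paper: your identity $(\star)$ is precisely the paper's displayed equation (1) rewritten in commutator form, and each direction is then read off by using, respectively, the invertibility of $g$ and the $F$-linear independence of $\delta_1,\dots,\delta_m$. The only differences are cosmetic --- you contract with the entries of $g^{-1}$ where the paper applies the operator identity to an arbitrary $a\in F$ and uses a matrix-transpose argument, and you spell out the transfer of linear independence from the $\partial_i$ to the $\delta_i$, which the paper merely asserts.
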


\begin{proof} Let $g\in GL(m,F)$,\ $\delta= g^{-1}\partial $. It is
clear that linear independence of $\partial_1,. . .,\partial_m$
implies linear independence of $\delta_1,. . .,\delta_m.$ For any
$i,j=1,2,...,m$ we have $\partial_j=
\sum_{k=1}^mg^j_k\delta_k,$
 \[ \partial_i\partial_j=
\sum_{k=1}^m(\partial_i(g^j_k)\delta_k+
g^j_k\partial_i\delta_k)= \sum_{k=1}^m\partial_i(g^j_k)\delta_k
+ \sum_{k=1}^m\sum_{s=1}^mg^j_kg^i_s\delta_s\delta_k.\] Therefore
due to $\partial_i\partial_j= \partial_j\partial_i $ one has
\begin{eqnarray}\begin{array}{c}
\sum_{k=1}^m\partial_i(g^j_k)\delta_k +
\sum_{k=1}^m\sum_{s=1}^mg^j_kg^i_s\delta_s\delta_k=\\
\sum_{k=1}^m\partial_j(g^i_k)\delta_k +
\sum_{k=1}^m\sum_{s=1}^mg^j_kg^i_s\delta_k\delta_s
\end{array}\end{eqnarray}

If $\delta_k\delta_s = \delta_s\delta_k$ for any $k,s=
1,2,...,m$ them due to $(1)$ one has
\[\sum_{k=1}^m\partial_i(g^j_k)\delta_k  =
\sum_{k=1}^m\partial_j(g^i_k)\delta_k \] i.e. $\partial_i(g^j_k)=
\partial_j(g^i_k)$ for any  $i,j,k= 1,2,...,m$ because of linear independence of
$\delta_1,. . .,\delta_m$. Thus in this case we get
 $  g\in GL^{\partial}(m,F)$.

Vice versa, if $\partial_i(g^j_k)=
\partial_j(g^i_k)$ for any  $i,j,k= 1,2,...,m$  then due to $(1)$ at any $a\in F$ one has
$\sum_{k=1}^m\sum_{s=1}^mg^j_kg^i_s\delta_s\delta_k a=
\sum_{k=1}^m\sum_{s=1}^mg^j_kg^i_s\delta_k\delta_s a$ for any
$i,j= 1,2,...,m$. These equalities can be written in the
following matrix form \[g(\delta_1\delta a,. . .,\delta_m\delta
a)g^t= g(\delta_1\delta a,. . .,\delta_m\delta a)^tg^t,\] where
$t$ means the transpose. Therefore $(\delta_1\delta a,. .
.,\delta_m\delta a)= (\delta_1\delta a,. . .,\delta_m\delta a)^t$
i.e.
 $\delta_k\delta_s a= \delta_s\delta_k a$ for any
$k,s= 1,2,...,m$.\end{proof}

Note that for $g\in GL^{\partial}(m,F)$ and $\delta=
g^{-1}\partial$ the following equality is valid
\begin{eqnarray}\begin{array}{c} GL^{\delta}(m,F)=
g^{-1}GL^{\partial}(m,F). \end{array}\end{eqnarray}

\begin{pr} Let  $ (F,\partial_1,
\partial_2, ..., \partial_m)$ be a differential field of characteristic zero. The following three conditions are equivalent.

a) The system of differential operators  $ \partial_1, \partial_2,
 ...,\partial_m $ is linear independent over $ F$.

b) There is no nonzero $ \partial $- differential
 polynomial over  $ F$ which vanishes at all values
 of indeterminates from  $ F$.

c) If $p^{\partial}\{x_{11}, x_{12},..., x_{1m},
 x_{21},..., x_{2m},..., x_{m1},..., x_{mm}\}=
p^{\partial}\{(x_{ij})_{i,j= 1,2,...,m}\}$ is a differential
polynomial over $F$  such that $p^{\partial}\{g\}= 0 $  at any
$ g=(g^i_j)_{i,j= 1,2,...,m}\in GL^{\partial}(m,F)$, then
  $ p^{\partial}\{(t_{ij})_{i,j= 1,2,...,m}\}= 0 $ is also valid,
for any indeterminates  $ (t_{ij})_{i,j= 1,2,...,m}$,  for
which $ \partial_k t_{ij}=
 \partial_i t_{kj}$  at  $ i,j,k= 1,2,...,m.$ \end{pr}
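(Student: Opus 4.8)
The plan is to prove the cyclic chain (a) $\Rightarrow$ (b) $\Rightarrow$ (c) $\Rightarrow$ (a); the implication (b) $\Rightarrow$ (a) is in any case immediate by contraposition, since a nontrivial relation $\sum_{i=1}^{m}a_i\partial_i=0$ with $a_i\in F$ yields the nonzero $\partial$-differential polynomial $\sum_i a_i\,\partial_i x$ in one differential indeterminate $x$ (here $\partial_i x$ denotes the corresponding jet variable), which vanishes at every value $x\in F$. Thus (a) is the substantive hypothesis.

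\textbf{(a) $\Rightarrow$ (b).} First I would record that (a) is equivalent to the existence of $u_1,\dots,u_m\in F$ with $J:=(\partial_i u_j)_{i,j}\in GL(m,F)$: if no such tuple existed, the vectors $(\partial_1 u,\dots,\partial_m u)$, $u\in F$, would span a proper $F$-subspace of $F^{m}$, producing a nonzero $(a_i)$ with $\sum_i a_i\partial_i=0$. Fix such $u_j$ and pass to $\delta:=J^{-1}\partial$; by Proposition 2.7 the $\delta_i$ commute, $\delta_i u_j$ is the Kronecker symbol, and $(F;\delta)$ has the same constant field $C\supseteq\mathbb{Q}$ as $(F;\partial)$. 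Since $\partial=J\delta$ and, on the weight-$\ell$ part of the jet variables of each $x_k$, the change between $\partial$- and $\delta$-variables is governed by the symmetric power $J^{\odot\ell}$, which is invertible by Proposition 2.3, one has $F\{\mathbf{x};\partial\}=F\{\mathbf{x};\delta\}$ and $p^{\partial}$ is represented by a nonzero $\delta$-differential polynomial $\widetilde p^{\,\delta}\{x_1,\dots,x_N\}$, say of order $r$. Now substitute $x_k=\sum_{|\alpha|\le d}\mu_{k\alpha}u^{\alpha}$ with $d\ge r$ and fresh commuting indeterminates $\mu_{k\alpha}$: because $\delta_i u_j$ is Kronecker, the jet variable $\delta^{\beta}x_k$ is sent to the $F$-linear form $\sum_{\alpha\gg\beta}\mu_{k\alpha}\frac{\alpha!}{(\alpha-\beta)!}u^{\alpha-\beta}$, and a triangular elimination (the coefficient of $\mu_{k\beta}$ in the $\beta$-form being $\beta!\neq 0$ in characteristic zero) shows that the finitely many such forms occurring in $\widetilde p^{\,\delta}$ are $F$-linearly, hence $F$-algebraically, independent. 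Therefore the substitution turns $\widetilde p^{\,\delta}$ into a nonzero element of $F[\mu_{k\alpha}]$; as $C$ is infinite one may further specialize each $\mu_{k\alpha}$ to a constant $c_{k\alpha}\in C$ without killing it, and then $x_k=\sum c_{k\alpha}u^{\alpha}\in F$ is a value at which $p^{\partial}$ does not vanish.

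\textbf{(b) $\Rightarrow$ (c) and (c) $\Rightarrow$ (a).} For the first, let $p^{\partial}\{(x_{ij})\}$ vanish on $GL^{\partial}(m,F)$ and substitute $x_{ij}\mapsto\partial_i y_j$ with new differential indeterminates $y_1,\dots,y_m$. Put $Q^{\partial}\{\mathbf{y}\}:=p^{\partial}\{(\partial_i y_j)\}\cdot\det\big((\partial_i y_j)_{i,j}\big)$. For any $\mathbf{v}\in F^{m}$ the matrix $(\partial_i v_j)$ satisfies the closedness relations, hence lies in $GL^{\partial}(m,F)$ as soon as its determinant is nonzero; so $Q^{\partial}$ vanishes on all of $F^{m}$, and $Q^{\partial}=0$ by (b). Since $F\{y_1,\dots,y_m\}$ is an integral domain in which $\det\big((\partial_i y_j)_{i,j}\big)\neq 0$, we get $p^{\partial}\{(\partial_i y_j)\}=0$. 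It remains to identify the kernel of the substitution $x_{ij}\mapsto\partial_i y_j$ with the differential ideal generated by the closedness relations $\partial_k x_{ij}-\partial_i x_{kj}$; this is a direct check: modulo that ideal the positive-order jet variables organize into classes labelled by a column index $j$ and a multi-index $\eta$ of weight $\ge 2$ (the class of $\partial^{\gamma}x_{ij}$ being determined by $j$ and $\gamma+\varepsilon_i$, where $\varepsilon_i\in I_m$ is the $i$-th standard multi-index), and the substitution maps these classes, together with the $x_{ij}$ themselves, bijectively onto the jet variables $\partial^{\mu}y_j$, $|\mu|\ge 1$. Hence $p^{\partial}$ lies in the closedness ideal, which is exactly (c). For (c) $\Rightarrow$ (a) argue contrapositively: from $\sum_i a_i\partial_i=0$, $(a_i)\neq 0$, set $p^{\partial}:=\sum_i a_i\,\partial_1 x_{i1}$; closedness forces $p^{\partial}\{g\}=\sum_i a_i\partial_i g^{1}_{1}=0$ on $GL^{\partial}(m,F)$, while the classes of $\partial_1 x_{i1}$, $i=1,\dots,m$, are pairwise distinct variables modulo the closedness ideal, so $p^{\partial}$ is not in that ideal, contradicting (c).

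\textbf{Main obstacle.} The crux is (a) $\Rightarrow$ (b): the reduction to "coordinate-like" elements $u_j$ is routine, and the real content is the linear-algebraic (Wronskian-type) fact that after the polynomial substitution the jet variables become algebraically independent over $F$ — this is precisely where characteristic zero and the invertibility of $J$ (through Proposition 2.3) are used. The two remaining implications amount to bookkeeping with the ideal generated by the closedness relations.
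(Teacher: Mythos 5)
Your proof is correct, but it is organized differently from the paper's. The paper cites the equivalence of (a) and (b) to Kolchin and then closes the loop by proving (c) $\Rightarrow$ (b) — a minimality-in-the-number-of-variables argument that reduces any everywhere-vanishing $\partial$-differential polynomial to one in a single indeterminate $x_{11}$, which then contradicts (c) — and (b) $\Rightarrow$ (c), via a normal form modulo the closedness relations, the substitution $t_{ij}=\partial_i t_j$, and multiplication by $\det(\partial_i t_j)$. Your (b) $\Rightarrow$ (c) is essentially the paper's argument, repackaged as the identification of the kernel of the substitution $x_{ij}\mapsto\partial_i y_j$ with the closedness ideal; that identification (classes labelled by $(j,\eta)$ mapping bijectively to the jet variables $\partial^{\mu}y_j$, $|\mu|\geq 1$) is exactly what the paper's normal-form step does implicitly, and your check of it is sound. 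Where you genuinely diverge: you close the cycle with (c) $\Rightarrow$ (a) via the explicit witness $\sum_i a_i\,\partial_1 x_{i1}$, which is shorter and more concrete than the paper's reduction-to-one-variable proof of (c) $\Rightarrow$ (b); and you supply a self-contained proof of (a) $\Rightarrow$ (b) (coordinate-like elements $u_j$, the gauge change $\delta=J^{-1}\partial$ justified by Proposition 2.7 and Theorem 2.4, and the triangular independence of the linear forms obtained after substituting $x_k=\sum_{\alpha}\mu_{k\alpha}u^{\alpha}$), where the paper simply cites Kolchin. Both routes are valid; yours buys independence from the external reference and a cleaner closing implication at the cost of reproducing Kolchin's Wronskian-type argument in full.
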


\begin{proof} The equivalence of the conditions \emph{a}) and \emph{b}) is shown in
\cite{Ko}.

The implication $c) \Rightarrow b)$ is clear. In fact, if there are nonzero $
\partial $- differential
 polynomials over  $ F$ which vanish at all values
 of indeterminates from  $ F$ we can take with
minimal number of variables. Let $f\{z_1, z_2,. . .,z_l\}$ be such
a polynomial. If $l> 1$ and $f\{z_1, a_2,. . .,a_l\} \neq 0$ for
some $a_2,. . .,a_l\in F$ then it contradicts minimality of $l$.
Because the nonzero polynomial in one variable $f\{z_1, a_2,. .
.,a_l\}$ will vanish at all values of $z_1$ from $F$. If $l> 1$
and $f\{z_1, a_2,. . .,a_l\}= 0$ for all $a_2,. . .,a_l\in F$ then
considering $f\{z_1, z_2,. . .,z_l\}$ as a $ \partial $-
differential polynomial in $z_1$ over $F\{z_2, z_3,. . .,z_l\}$
once again we get a contradiction. Indeed, in this case at
least one of the coefficients of this polynomial has to be nonzero
$ \partial $- differential polynomial in $z_2, z_3,. . .,z_l$ ( as
$f\{z_1, z_2,. . .,z_l\}$ is a nonzero polynomial) and vanish at
all values of $z_2, z_3,. . .,z_l$ from $F$. It contradicts
the minimality of $l$. Thus $l= 1$ and we can consider nonzero
polynomial $p^{\partial}\{(x_{ij})_{i,j= 1,2,...,m}\}=
f\{x_{11}\}$ which vanishes at any $ g= (g^i_j)_{i,j=
1,2,...,m}$  $\in GL^{\partial}(m,F)$. This contradicts
\emph{c}).

Let us prove now that \emph{b}) implies \emph{c}). Assume that $
p^{\partial}\{(t_{ij})_{i,j= 1,2,...,m}\}\neq 0 $ for some
polynomial $p^{\partial}\{(x_{ij})_{i,j= 1,2,...,m}\}$ and $
p^{\partial}\{g\}= 0 $ for any $g\in GL^{\partial}(m,F)$. Due
to the equalities $ \partial_{k}t_{ij}=
 \partial_{i}t_{kj}$, $i,j,k= 1,2,...,m $
the nonzero $ p^{\partial}\{(t_{ij})_{i,j= 1,2,...,m}\} $ can
be represented as a polynomial $P$ of the monomials $
 {\partial_k}^{n_{k,i}} {\partial_{k+1}}^{n_{k+1,i}} ...
{\partial_m}^{n_{m,i}} t_{ki}, $ where  $ n_{j,i}$ are
nonnegative integers, $i,k= 1,2,...,m $.  Let $t_1, t_2,...,
t_m $ be any differential indeterminates over $ F$.  The
 inequality $ 0\neq p^{\partial}\{(t_{ij})_{i,j=
 1,2,...,m}\} $ and substitution $ t_{ij}=
 \partial_it_j $ give us a nonzero differential polynomial $\det(\partial_it_j)_{i,j=
 1,2,...,m}P$
 in $t_1, t_2,..., t_m  $ the value of which at any
 $(a_1, a_2,..., a_m)  $ from $F^m$ is zero. This contradicts
 \emph{\emph{\emph{b}}}).\end{proof}

\begin{pr} If  $\mathbf{a}=(a_1, a_2,..., a_m)$ and
$\mathbf{b}=(b_1, b_2,..., b_m)$ are any two nonzero row vectors from $F^m$
then there is an extension $(F_1,\partial)$ of $(F,\partial)$
where the equation $\mathbf{a}T=\mathbf{b}$ has solution in $GL^{\partial}(m,F_1)$.\end{pr}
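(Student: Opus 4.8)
The plan is to obtain $T$ as a matrix of potentials. If $(F_1,\partial)$ is any differential field extension of $(F,\partial)$ (with commuting derivations $\partial_1,\dots,\partial_m$ extending those of $F$) and $\sigma_1,\dots,\sigma_m\in F_1$, then the matrix $T$ with entries $T^i_j=\partial_i\sigma_j$ automatically satisfies $\partial_iT^j_k=\partial_i\partial_j\sigma_k=\partial_j\partial_i\sigma_k=\partial_jT^i_k$ for all $i,j,k$, so that $T\in GL^\partial(m,F_1)$ as soon as $\det(\partial_i\sigma_j)_{i,j=1}^m\ne 0$; moreover $\mathbf aT=\mathbf b$ is equivalent to $\sum_{i=1}^m a_i\,\partial_i\sigma_k=b_k$ for $k=1,\dots,m$. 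Hence it suffices to produce an extension $(F_1,\partial)$ of $(F,\partial)$ together with elements $\sigma_1,\dots,\sigma_m\in F_1$ satisfying $\sum_{i=1}^m a_i\partial_i\sigma_k=b_k$ for every $k$ and $\det(\partial_i\sigma_j)\ne 0$.

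Fix $p$ with $a_p\ne 0$ (possible since $\mathbf a\ne 0$), and in the differential polynomial ring $F\{y_1,\dots,y_m;\partial\}$ consider the differential ideal $J=[\ell_1,\dots,\ell_m]$ with $\ell_k=a_p\,\partial_p y_k+\sum_{i\ne p}a_i\,\partial_i y_k-b_k$. Since $a_p$ is a unit of $F$, the relation $\ell_k=0$ together with its iterated derivatives expresses every $\partial^\nu y_k$ with $\nu_p\ge 1$ as a polynomial over $F$ in the derivatives $\partial^\mu y_k$ with $\mu_p=0$; as $\{\ell_1,\dots,\ell_m\}$ is a coherent autoreduced set with unit initials and separants, the Ritt--Kolchin theory of characteristic sets shows that no relation among the latter derivatives is thereby forced, so $R:=F\{y_1,\dots,y_m;\partial\}/J$ is a differential domain $F$-isomorphic to the polynomial ring over $F$ in the variables $\partial^\mu y_k$ ($\mu_p=0$, $1\le k\le m$). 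In particular $F$ embeds in $R$, and in the fraction field $F_1$ of $R$ these variables are algebraically independent over $F$. Writing $\sigma_k$ for the image of $y_k$, the relations $\sum_i a_i\partial_i\sigma_k=b_k$ hold by construction, and the $m(m-1)$ elements $\tau^i_k:=\partial_i\sigma_k$ with $i\ne p$, $1\le k\le m$, lie among the algebraically independent generators.

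Finally I check $\det(\partial_i\sigma_j)\ne 0$. From $\sum_i a_i\partial_i\sigma_k=b_k$ we get $\partial_p\sigma_k=\frac{1}{a_p}\bigl(b_k-\sum_{i\ne p}a_i\tau^i_k\bigr)$, so $\det(\partial_i\sigma_j)=P\bigl((\tau^i_k)_{i\ne p}\bigr)$, where $P$ is the polynomial over $F$ given by the determinant of the matrix whose row $i$ (for $i\ne p$) is $(X^i_k)_{k}$ and whose row $p$ is $\bigl(\frac{1}{a_p}(b_k-\sum_{i\ne p}a_iX^i_k)\bigr)_{k}$. Specializing the $X^i_k$ to elements of $F$ produces exactly the $m\times m$ matrices $R_0$ over $F$ with $\mathbf aR_0=\mathbf b$, and since $\mathbf a\ne 0$ and $\mathbf b\ne 0$ some such $R_0$ is invertible (take for $R_0^{t}$ any invertible matrix sending the nonzero column $\mathbf a^{t}$ to $\mathbf b^{t}$); then $P(R_0)=\det R_0\ne 0$, so $P$ is not the zero polynomial, and algebraic independence of the $\tau^i_k$ over $F$ yields $\det(\partial_i\sigma_j)=P((\tau^i_k))\ne 0$. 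Thus $T=(\partial_i\sigma_j)$ solves $\mathbf aT=\mathbf b$ in $GL^\partial(m,F_1)$. (When $m=1$ the construction degenerates to $F_1=F$, $T=(b_1/a_1)$.) The part I expect to demand the most care is the existence step — the primality of $J$ and the persistence of algebraic independence of the parametric derivatives in $F_1$ — which is precisely where the characteristic-set machinery and the invertibility of $a_p$ enter.
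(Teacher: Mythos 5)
Your proof is correct, and it reaches the conclusion by a genuinely different device than the paper. The paper (assuming $a_1\neq 0$) adjoins the entries $t_{ij}$, $i\geq 2$, as differential indeterminates subject only to the partial symmetry relations $\partial_k t_{ij}=\partial_i t_{kj}$ for $i,k\geq 2$, solves the linear equation $\mathbf{a}T=\mathbf{b}$ for the first row, observes that $\det T$ is a nonzero multiple of a determinant with first row $\mathbf{b}\neq 0$ and generic remaining rows, and only afterwards \emph{defines} $\partial_1$ on the adjoined indeterminates by $\partial_1(t_{k1},\dots,t_{km})=\partial_k(t_{11},\dots,t_{1m})$ so as to force $T\in GL^{\partial}(m,F_1)$; this last step tacitly requires checking that the extended derivations are well defined and still commute, which the paper does not verify. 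You instead write $T$ as a matrix of potentials, $T^i_j=\partial_i\sigma_j$, so the full integrability condition $\partial_iT^j_k=\partial_jT^i_k$ is automatic, and the whole problem reduces to realizing the linear system $\sum_i a_i\partial_i\sigma_k=b_k$ in a differential field extension; the price you pay is the existence step, which you settle by noting that $\{\ell_1,\dots,\ell_m\}$ is a linear autoreduced set with distinct leaders $\partial_p y_k$ and unit initials (so coherence is vacuous and the quotient is a polynomial ring in the parametric derivatives), and your nonvanishing argument for $\det(\partial_i\sigma_j)$ via specialization to an invertible $R_0$ over $F$ with $\mathbf{a}R_0=\mathbf{b}$ is sound because the $\tau^i_k$ with $i\neq p$ are among the algebraically independent generators. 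In short, the paper's route is shorter but leaves the consistency of the a posteriori definition of $\partial_1$ implicit, while yours trades that for a standard (and here easy, because linear) characteristic-set argument and gets the commutativity and symmetry for free.
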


\begin{proof} Assume, for example, that $a_1\neq 0$ and $
\{t_{ij}\}_{i= 2,3,...,m, j= 1,2,...,m}$ are such
 differential indeterminates over $ F$ that $
 \partial_{k}t_{ij}= \partial_{i}t_{kj}$  for  $i,k=
 2,3,...,m, j= 1,2,...,m$.
Consider \[(a_1, a_2,\dots, a_m)\begin{pmatrix}y_1& y_2&\dots &y_m \\
t_{21}& t_{22}&\dots &t_{im} \\ \hdotsfor[2]{4} \\ t_{m1}&
t_{m2}&\dots &t_{mm}\end{pmatrix}=(b_1, b_2,\dots , b_m)\] as a system of
linear equations in $y_1, y_2,..., y_m$.

 It has solution
\[(y_1, y_2,..., y_m)=(t_{11}, t_{12},. . ., t_{1m})= \frac{1}{a_1}(\mathbf{b}- \sum_{i=2}^{m}a_i(t_{i1}, t_{i2},.
. ., t_{im})) \]and the determinant of the corresponding matrix
$T=(t_{ij})_{i,j=1,2,...,m}$ is equal to
\[\frac{1}{a_1}\det\begin{pmatrix}b_1& b_2&.&.&.&b_m\\ t_{21}&
t_{22}&.&.&.&t_{im}\\ .& .&.&.&.&.\\ t_{m1}&
t_{m2}&.&.&.&t_{mm} \end{pmatrix}\] which is not zero because of  $\mathbf{b}\neq 0$.
Furthermore, if one defines \\ $\partial_1(t_{k1}, t_{k2},. . .,
t_{km})$ as
\[\partial_1(t_{k1}, t_{k2},. . ., t_{km})= \partial_k (t_{11}, t_{12},. . ., t_{1m})= \partial_k(\frac{1}{a_1}(\mathbf{b}-\sum_{i=2}^{m}a_i(t_{i1}, t_{i2},.
. ., t_{im}))) \] then $T\in GL^{\partial}(m, F_1)$, where
$F_1=F\langle
\{t_{i,j}\}_{i=\overline{2,m},j=1,2,...,m};\partial \rangle$.
\end{proof}

Further let $\mathbf{e}$ stand for the row vector $(1,0,0,...,0)\in F^m$
and $T$ stand for the matrix $(t_{ij})_{i,j= 1,2,...,m}$,
where $\{t_{ij}\}_{i,j= 1,2,...,m}$- are differential
indeterminates with the basic relations $ \partial_{k}t_{ij}=
 \partial_{i}t_{kj}$  for all $i,j,k= 1,2,...,m.$

\begin{cor} If $ (F; {\partial}_1,
 {\partial}_2, ..., {\partial}_m) $- is a
 differential field of characteristic zero,
  $ \partial_1, \partial_2, ..., \partial_m $
 is linear independent over  $ F$ and \\ $
 p^{\partial}\{t_1, t_2,..., t_m, t_{m+1},...,t_{m+l}\} $ is an arbitrary
 nonzero differential polynomial over $ F$ then \[\quad a) \  p^{\partial}\{\mathbf{e}T,t_{m+1},..., t_{m+l} \} \neq 0,\]
 \[b)\ p^{T^{-1}\partial}\{\mathbf{e}T^{-1}, t_{m+1},..., t_{m+l} \} \neq 0 ,\qquad  c)\ p^{T^{-1}\partial}\{\mathbf{e}T,t_{m+1},..., t_{m+l}\} \neq 0.  \] \end{cor}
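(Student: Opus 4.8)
The plan is to prove (a) directly by a specialization argument, and then to deduce (b) and (c) from the same argument; the extra ingredient for (b) and (c) is that specializing the generic matrix $T$ to a matrix $g$ also turns the operator vector $T^{-1}\partial$ into $g^{-1}\partial$. I will use the previous Proposition (solvability of $\mathbf{a}T=\mathbf{b}$ inside $GL^{\partial}$), the Proposition stating that $g^{-1}\partial$ is a commuting system exactly when $g\in GL^{\partial}$, and the relation $(2)$.

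For (a) I would argue by contradiction. If $p^{\partial}\{\mathbf{e}T,t_{m+1},\dots,t_{m+l}\}=0$ then, since $\mathbf{e}T=(t_{11},\dots,t_{1m})$ and the $t_{m+j}$ are differential indeterminates, this is an identity in the differential polynomial ring $F\{t_{ij},\,t_{m+1},\dots,t_{m+l};\partial\}$ (the $t_{ij}$ being subject only to $\partial_kt_{ij}=\partial_it_{kj}$), and hence is preserved under every substitution $t_{ij}\mapsto g_{ij}$ with $g\in GL^{\partial}(m,F_1)$, $(F_1,\partial)\supseteq(F,\partial)$, and $t_{m+j}\mapsto$ elements of $F_1$. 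Adjoin free differential indeterminates $\theta_1,\dots,\theta_{m+l}$ to $F$; since $(\theta_1,\dots,\theta_m)\neq 0$, the previous Proposition (with $\mathbf{a}=\mathbf{e}$ and $\mathbf{b}=(\theta_1,\dots,\theta_m)$) yields an extension $F_1$ and $g\in GL^{\partial}(m,F_1)$ with $\mathbf{e}g=(\theta_1,\dots,\theta_m)$. Specializing accordingly turns the identity into $p^{\partial}\{\theta_1,\dots,\theta_{m+l}\}=0$, i.e. $p=0$ — a contradiction.

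For (b) and (c) the same specialization $t_{ij}\mapsto g_{ij}$, $g\in GL^{\partial}(m,F_1)$, replaces $T^{-1}\partial$ by $\delta':=g^{-1}\partial$, again a commuting system. Hence, were $p^{T^{-1}\partial}\{\mathbf{e}T^{-1},t_{m+j}\}=0$ (resp. $p^{T^{-1}\partial}\{\mathbf{e}T,t_{m+j}\}=0$), we would get $p^{\delta'}\{\mathbf{e}g^{-1},\theta_{m+j}\}=0$ (resp. $p^{\delta'}\{\mathbf{e}g,\theta_{m+j}\}=0$) for every admissible $g$ and every tuple $\theta_{m+j}$ of free differential indeterminates. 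So it suffices to find one $(F_1,\partial)$ and $g\in GL^{\partial}(m,F_1)$ for which $\mathbf{e}g^{-1}$ (resp. $\mathbf{e}g$), together with the $\theta_{m+j}$, is $\delta'$-differentially independent over $F$; then $p^{\delta'}$ is nonzero there exactly as in (a). I would first enlarge $F$ so that it contains $x_1,\dots,x_m$ with $\partial_ix_j=\delta_{ij}$ (harmless, since a nonzero element of a field stays nonzero in any overfield), and take $g=(\partial_i\psi_j)$ with $\psi_1,\dots,\psi_m$ free differential indeterminates over $F$. Then $\det g\neq 0$; $\delta'=g^{-1}\partial$ satisfies $\delta'_a\psi_c=\delta_{ac}$ and $(g^{-1})_{ac}=\delta'_ax_c$; and $\mathbf{e}g=(\partial_1\psi_1,\dots,\partial_1\psi_m)$, $\mathbf{e}g^{-1}=(\delta'_1x_1,\dots,\delta'_1x_m)$ are the first rows of the two mutually inverse Jacobians of a generic change of variables. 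The required differential independence follows from a differential–transcendence–degree count: $g$ is $\partial$-curl-free, so the relation $\partial_1g_{ij}=\partial_i(\mathbf{e}g)_j$ shows every $g_{ij}$ (hence all of $g$ and of $g^{-1}$) is $\partial$-differentially algebraic over $F\langle\mathbf{e}g;\partial\rangle$, which has the right transcendence degree, so $\mathbf{e}g$ is a $\partial$-differential transcendence basis; the $\delta'$-curl-freeness of $g^{-1}$, via $\delta'_1(g^{-1})_{aj}=\delta'_a(g^{-1})_{1j}$, gives the same conclusion for $\mathbf{e}g^{-1}$ with $\delta'$ in place of $\partial$; and differential independence is insensitive to replacing $\partial$ by $g^{-1}\partial$.

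The step I expect to require the most care is this last one, and the reason it is delicate is a circularity peculiar to (b) and (c): the operators $\delta'=g^{-1}\partial$ against which genericity must be tested depend on the very matrix $g$ from which the row $\mathbf{e}g^{\pm1}$ is formed. Choosing $g$ explicitly as the Jacobian of a generic change of variables is what breaks the circle — it makes $\delta'$, $g$, $g^{-1}$ and the two rows in question all visible at once — and reduces the problem to the standard fact that the first row of a generic curl-free matrix (or of its inverse) is a differential transcendence basis of the differential field it generates, which is exactly what the above filtration/ODE argument establishes.
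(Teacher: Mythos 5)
Your argument for part \emph{a}) is sound and is essentially the paper's (which simply cites Propositions 2.7 and 2.8): specialize $T$ so that $\mathbf{e}T$ becomes a fresh differentially independent $m$-tuple and invoke the nonvanishing of nonzero differential polynomials at independent indeterminates.

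Parts \emph{b}) and \emph{c}) are where the real content lies, and your treatment of them has a genuine gap at exactly the step you flag as delicate. You reduce everything to the assertion that $\mathbf{e}g^{-1}$ (resp.\ $\mathbf{e}g$), for $g=(\partial_i\psi_j)$ a generic Jacobian, together with fresh indeterminates, is $\delta'$-differentially independent ``over $F$'', where $\delta'=g^{-1}\partial$, and you justify this by a $\partial$-transcendence-degree count plus the principle that ``differential independence is insensitive to replacing $\partial$ by $g^{-1}\partial$''. That principle is false in general when $g$ is entangled with the elements being tested: already for $m=1$, if $a$ is $\partial$-differentially transcendental over $F$ and one takes $g=\partial a/a$, then $\delta' a=a$, so $a$ satisfies the nonzero relation $\delta' t-t=0$ over $F$. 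In your situation $g$ and $\mathbf{e}g^{\pm 1}$ are built from the same $\psi_j$, so this is precisely the configuration in which the principle needs proof rather than citation. Moreover the ``$\delta'$-transcendence degree over $F$'' you appeal to is not well defined, since $F$ is not closed under $\delta'$; what is actually needed is that the infinite family $\{(\delta')^{\alpha}(g^{\pm 1})_{1c}\}$ is algebraically independent over $F$, and your curl-freeness/filtration remarks bound the differential algebraicity from above but never establish the required lower bound, which is the whole difficulty. The paper avoids this issue entirely: after specializing to $T_0=(\partial_i x_j)$ it applies a \emph{second} gauge substitution $\partial\mapsto T^{-1}\partial$ with a new generic curl-free $T$; this leaves $\delta_0=T_0^{-1}\partial$ fixed while transporting $\mathbf{e}T_0^{-1}$ to $\mathbf{e}T_0^{-1}T=\mathbf{e}S_0$ with $S_0$ a generic element of $GL^{\delta_0}$, and then part \emph{a}) applied in the $\delta_0$-frame finishes the proof. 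Some such use of the groupoid action (or an honest direct proof of the genericity of the first row of the inverse of a generic Jacobian with respect to the induced derivations) is needed to close your argument.
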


\begin{proof} The proof of inequality \emph{a}) is evident due to
Propositions 2.7 and 2.8.

 Let us prove \emph{b}). If one assumes that
$ p^{T^{-1}\partial}\{\mathbf{e}T^{-1}, t_{m+1},..., t_{m+l}\}=0$ then in
particular for $T_0=(\partial_i x_j)_{i,j=1,2,...,m}$ one has
\[ p^{T^{-1}_0\partial}\{\mathbf{e}T^{-1}_0, t_{m+1},..., t_{m+l}\}=0.\] It should remain be true if one substitutes
$T^{-1}\partial$ for $\partial$ into this equality. As far as $\delta_0=
T_0^{-1}\partial$ is invariant with respect to such substitution
and $T_0$ is transformed to $T^{-1}T_0$ so \[
p^{T^{-1}_0\partial}\{\mathbf{e}T^{-1}_0T, t_{m+1},..., t_{m+l}\}=0.\] But
for any $S_0\in GL^{\delta_0}(m, F\langle x_1,...,x_m;\partial
\rangle )$ the equation $T^{-1}_0T= S_0$ has a solution in
$GL^{\partial}(m, F\langle x_1,...,x_m;\partial \rangle )$, namely
$T= T_0S_0$. Therefore due to Proposition 2.7 for the matrix of
variables $S=(s_{ij})_{i,j=1,2,...,m}$ such that
$\delta_{0i} s_{jk}= \delta_{0j} s_{ik}$ for all
$i,j,k=1,2,...,n$ one has
\[p^{\delta_0}\{\mathbf{e}, t_{m+1},..., t_{m+l}\}=0.\] Due to Corollary 2.9, part \emph{a}), we have $p^{\delta_0}\{t_1,..., t_{m+l}\}=0$ that is $p^{\partial}\{t_1,..., t_{m+l}\}=0$, which is a contradiction. The proof of \emph{c}) can be given similarly.\end{proof}

\section{The main results}

Further $(F,\partial )$ stands for a characteristic zero $\partial$-differential field and it is assumed that $\partial_1,. . .,\partial_m$ is linear independent
over $F$, where $\partial
=(\partial_1,. . .,\partial_m)$.

We use the following obvious fact repeatedly: If $t_1,...,t_l$ is
a $\partial$-algebraic independent system of variables   over $F$,
$g\in GL^{\partial}(m, F)$ and $p^{\partial}\{t_1,...,t_l\}$ is a
$\partial$-polynomial over $F$ then the following three equalities
\[p^{\partial}\{t_1,...,t_l\}=0, \ p^{g^{-1}\partial}\{t_1,...,t_l\}=0,\ p^{T^{-1}\partial}\{t_1,...,t_l\}=0. \]
 are equivalent.

Let us assume that for a given subgroup $H$ of $
GL(n,C)$ we have such a nonsingular matrix
\[M^{\partial}\langle \mathbf{x}\rangle = (M^{\partial}_{ij}\langle
\mathbf{x}\rangle)_{i,j=1,2,...,m}\] , where
$M^{\partial}_{ij}\langle \mathbf{x}\rangle\in C\langle \mathbf{x};\partial
\rangle$ and
 $\partial_{k}M^{\partial}_{ij}= \partial_{i}M^{\partial}_{kj}$  for  $i,j,k= 1,2,...,m$, that  \begin{eqnarray}\begin{array}{c}
M^{g^{-1}{\partial}}\langle \mathbf{x}h\rangle =
g^{-1}M^{\partial}\langle \mathbf{x}\rangle
\end{array}\end{eqnarray}
for any $g\in GL^{\partial}(m,F)$ and $h\in H$.

It is clear that  $C\langle \mathbf{x};\partial\rangle^H$ is a
finitely generated $\partial$-differential field over $C$ as a
subfield of $C\langle \mathbf{x};\partial \rangle$
 and $C\langle \mathbf{x};\partial\rangle^{(GL^{\partial}(m,F),H)}$ is a differential field with respect to $\delta = M^{\partial}\langle \mathbf{x}\rangle^{-1}\partial$. One of the most important questions is the differential-algebraic transcendence degree of $C\langle \mathbf{x};\partial\rangle^{(GL^{\partial}(m,F),H)}$ as a such $\delta $-field over $C$.

\begin{theor} The following equality \[\delta-\mbox{tr.deg.}C\langle \mathbf{x};
\partial\rangle^{(GL^{\partial}(m,F),H)}/C= n-m \ \mbox{holds}. \]\end{theor}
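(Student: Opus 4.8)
The plan is to establish the equality $\delta\text{-tr.deg.}\,C\langle\mathbf{x};\partial\rangle^{(GL^{\partial}(m,F),H)}/C = n-m$ by proving the two inequalities separately, using the tower of fields $C \subseteq C\langle\mathbf{x};\partial\rangle^{(GL^{\partial}(m,F),H)} \subseteq C\langle\mathbf{x};\partial\rangle^{H} \subseteq C\langle\mathbf{x};\partial\rangle$ together with the gauge transformation $\delta = M^{\partial}\langle\mathbf{x}\rangle^{-1}\partial$. First I would recall (or quote from \cite{B1,B2}) that $C\langle\mathbf{x};\partial\rangle^{H}$ has $\partial$-differential transcendence degree $n$ over $C$, since $H \le GL(n,C)$ acts algebraically and the full $\partial$-differential rational function field $C\langle\mathbf{x};\partial\rangle$ already has $\partial$-tr.deg. $n$ over $C$ (the $n$ coordinates $x_1,\dots,x_n$ being $\partial$-algebraically independent), and passing to $H$-invariants preserves this since $H$ is algebraic. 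The key structural fact is that the larger field $C\langle\mathbf{x};\partial\rangle^{H}$, regarded now as a $\delta$-differential field, decomposes: one can reconstruct $\partial$ from $\delta$ and the matrix $M^{\partial}\langle\mathbf{x}\rangle$, whose $m^2$ entries are themselves $\delta$-differentially related, and modding out by the reparametrization group $GL^{\partial}(m,F)$ exactly removes an $m$-dimensional worth of differential freedom.

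For the upper bound $\delta\text{-tr.deg.} \le n-m$, I would argue that $C\langle\mathbf{x};\partial\rangle^{H}$ is generated, as a $\delta$-differential field, by $C\langle\mathbf{x};\partial\rangle^{(GL^{\partial}(m,F),H)}$ together with the entries of $M^{\partial}\langle\mathbf{x}\rangle$ and finitely many auxiliary quantities that carry the non-invariant part of the data. Because $GL^{\partial}(m,F)$ is, by Proposition 2.8 and the groupoid remark, locally an $m^2$-parameter family constrained by the $m^2 - m$ integrability relations $\partial_k g^j_i = \partial_i g^k_j$ (so effectively $m$ independent $\partial$-differential parameters after accounting for the relations $\partial_i t_{kj} = \partial_k t_{ij}$ on the matrix of indeterminates $T$), the quotient drops the $\partial$-differential transcendence degree by exactly $m$. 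Concretely: $\partial$-tr.deg. of $C\langle\mathbf{x};\partial\rangle^{H}$ over $C\langle\mathbf{x};\partial\rangle^{(GL^{\partial}(m,F),H)}$ (as a $\delta$-field) equals $m$, and one subtracts. Here Theorem 2.4 (the $\odot$-expansion of $\partial^{\odot m}/m!$ in terms of $\delta$ and $g$) is the computational engine: it shows that every $\partial$-derivative of an element is a $\delta$-differential rational expression in that element, the entries of $g = M^{\partial}\langle\mathbf{x}\rangle$, and their $\delta$-derivatives, so no further transcendence is hidden in the $\partial$-structure.

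For the lower bound $\delta\text{-tr.deg.} \ge n-m$, I would exhibit $n-m$ explicit $\delta$-differentially independent invariants, or argue abstractly: pass to a suitable $\partial$-extension $(F_1,\partial)$ of $(F,\partial)$ as in Propositions 2.7–2.8 so that the reparametrization action can be "normalized" (using the row vector $\mathbf{e}$ and the matrix $T$, exactly as in Corollary 2.9 and Proposition 2.10), reducing $\mathbf{x}$ to a canonical form with $n-m$ remaining functional degrees of freedom — intuitively, an $m$-parameter surface in $\mathbb{R}^n$ modulo the $m$-dimensional reparametrization group has $n-m$ "transverse" functions' worth of $\delta$-differential data. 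Then invoke Corollary 2.9(b),(c): a nonzero $\delta$-differential polynomial relation among $n-m$ chosen invariants would, after substituting $\mathbf{e}T$ or $\mathbf{e}T^{-1}$ and the gauge-transformed derivations, pull back to a nonzero differential polynomial vanishing identically, contradicting the linear independence of $\partial_1,\dots,\partial_m$ over $F$ (equivalently, Proposition 2.6(b)).

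The main obstacle I anticipate is the upper bound — specifically, making rigorous the claim that adjoining the entries of $M^{\partial}\langle\mathbf{x}\rangle$ to the invariant field recovers all of $C\langle\mathbf{x};\partial\rangle^{H}$ as a $\delta$-field, and that this adjunction costs exactly $m$ (not more, not less) in $\delta$-differential transcendence degree. One must carefully track the integrability constraints $\partial_k M^{\partial}_{ij} = \partial_i M^{\partial}_{kj}$ and show they cut the naive count $m^2$ down to $m$, which is precisely where the structure of $GL^{\partial}(m,F)$ as a groupoid base rather than a group (the remark after Proposition 2.5, and \cite{W}) must be used with care; a dimension count that is cavalier about these relations would give the wrong answer. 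Theorem 2.4 should make this bookkeeping tractable by giving the change-of-derivation formula in closed $\odot$-form.
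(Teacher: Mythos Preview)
Your overall strategy---compute a transcendence degree via a tower of fields, with the reparametrisation groupoid eating exactly $m$ degrees---is correct in spirit, but your choice of intermediate field creates a genuine obstacle, and you are missing the key step that handles $H$.

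The paper does \emph{not} route the argument through $C\langle\mathbf{x};\partial\rangle^{H}$; it routes it through $C\langle\mathbf{x};\partial\rangle^{GL^{\partial}(m,F)}$. The reason is simple but decisive: the coordinates $x_1,\dots,x_n$ are trivially $GL^{\partial}(m,F)$-invariant (the groupoid acts only on the derivations), so $C\langle\mathbf{x};\partial\rangle$ is generated as a $\partial$-field by elements of $C\langle\mathbf{x};\partial\rangle^{GL^{\partial}(m,F)}$. This immediately gives that the entries of $M^{\partial}\langle\mathbf{x}\rangle^{-1}$ generate $C\langle\mathbf{x};\partial\rangle$ over $C\langle\mathbf{x};\partial\rangle^{GL^{\partial}(m,F)}$ as a $\delta$-field, and the relations $\delta_k N_{ij}=\delta_i N_{kj}$ cut the $\delta$-transcendence degree of that extension to at most $m$. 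With your intermediate field $C\langle\mathbf{x};\partial\rangle^{H}$ you lose this: the $x_i$ are not $H$-invariant, so you cannot claim that the entries of $M$ generate $C\langle\mathbf{x};\partial\rangle^{H}$ over $C\langle\mathbf{x};\partial\rangle^{(GL^{\partial},H)}$ without an additional argument (essentially Theorem~3.2, which in the paper is proved \emph{after} Theorem~3.1). The obstacle you flag at the end is real, and the fix is to change the tower, not to push harder on the dimension count.

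Second, your proposal has no mechanism for passing from $C\langle\mathbf{x};\partial\rangle^{GL^{\partial}(m,F)}$ (or from $C\langle\mathbf{x};\partial\rangle^{H}$) down to $C\langle\mathbf{x};\partial\rangle^{(GL^{\partial}(m,F),H)}$ without losing $\delta$-transcendence degree. The paper handles this with a Wronskian-type argument: since $x_1,\dots,x_n$ are linearly independent over $C$, there exist multi-indices $\alpha^1,\dots,\alpha^n$ with $\det[\delta^{\alpha^1}\mathbf{x};\dots;\delta^{\alpha^n}\mathbf{x}]\neq 0$, and then each $x_i$ satisfies the linear $\delta$-differential equation obtained by bordering this determinant with an extra column $\delta^{\alpha}\overline{x}$. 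The ratio of these determinants is $(GL^{\partial}(m,F),H)$-invariant, so each $x_i$ is $\delta$-algebraic over the full invariant field. This is the step that neutralises $H$; nothing in your sketch plays this role. Finally, the paper's lower bound is not obtained by exhibiting $n-m$ invariants directly, but by proving a lemma: for $f_1,\dots,f_l\in C\langle\mathbf{x};\partial\rangle^{GL^{\partial}(m,F)}$, the system $M^{\partial}_{11},\dots,M^{\partial}_{1m},f_1,\dots,f_l$ is $\delta$-algebraically independent over $C$ if and only if it is $\partial$-algebraically independent over $C$. This equivalence (proved via Corollary~2.9 and the substitution $T=M^{\partial}\langle\mathbf{x}\rangle S_0$) is what converts the known $\partial$-count into the desired $\delta$-count, and it replaces your vague normalisation step.
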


\begin{proof} First of all let us show that the system
 $M^{\partial}_{11}\langle \mathbf{x}\rangle ,M^{\partial}_{12}\langle \mathbf{x}\rangle,...,M^{\partial}_{1m}\langle \mathbf{x}\rangle $ is  $\delta$-algebraic independent over $C\langle \mathbf{x}; \partial\rangle^{GL^{\partial}(m,F)}$. If $p^{\delta}\{t_1,...,t_m\}$ is such a $\delta$-polynomial over $C\langle \mathbf{x}; \partial\rangle^{GL^{\partial}(m,F)}$ for which \[p^{\delta}\{M^{\partial}_{11}\langle \mathbf{x}\rangle ,M^{\partial}_{12}\langle \mathbf{x}\rangle,...,M^{\partial}_{1m}\langle \mathbf{x}\rangle\}=0\] then this equality should remain be true if one substitutes $g^{-1}\partial$ for $\partial$ into it. Therefore, as far as all coefficients of $p^{\delta}\{t_1,...,t_m\}$, $\delta$ are invariant with respect to such substitutions and $M^{g^{-1}{\partial}}\langle \mathbf{x}\rangle = g^{-1}M^{\partial}\langle \mathbf{x}\rangle $ one has
$p^{\delta}\{eg^{-1}M^{\partial}\langle \mathbf{x}\rangle\}=0$ i.e.
$p^{\delta}\{eT^{-1}M^{\partial}\langle \mathbf{x}\rangle\}=0$. But for
any $S_0\in GL^{\delta}(m, F\langle \mathbf{x}; \partial\rangle)$ the
equation $M^{\partial}\langle \mathbf{x}\rangle^{-1}T=S_0$ has solution
in $GL^{\partial}(m, F\langle \mathbf{x}; \partial\rangle)$, namely
$T=M^{\partial}\langle \mathbf{x}\rangle S_0$. It implies that for the
matrix of variables $S= (s_{ij})_{i,j=1,2,...,m}$, for which
$\delta_i s_{jk}=\delta_j s_{ik}$ ,$i,j,k=1,2,...,m$, one has
$p^{\delta}\{\mathbf{e}S^{-1}\}=0$. Due to Corollary 2.9, part \emph{a}) one has
$p^{\delta}\{t_1,...,t_m\}=0$.

Now let $f^{\partial}_1\langle \mathbf{x}\rangle,..., f^{\partial}_l\langle
\mathbf{x}\rangle $ be any system of elements of $C\langle \mathbf{x};
\partial\rangle^{GL^{\partial}(m,F)}$. We show that the system
\[M^{\partial}_{11}\langle \mathbf{x}\rangle ,M^{\partial}_{12}\langle
\mathbf{x}\rangle,...,M^{\partial}_{1m}\langle \mathbf{x}\rangle,
f^{\partial}_1\langle \mathbf{x}\rangle,..., f^{\partial}_l\langle \mathbf{x}\rangle
\] is $\delta$-algebraic independent over $C$ if and only if it is
$\partial$-algebraic independent over $C$.

Indeed, if this system is $\delta$-algebraic independent over $C$
and $p^{\partial}\{t_1,...,t_{m+l}\}$ is any polynomial over $C$
for which \[p^{\partial}\{\mathbf{e}M^{\partial}\langle \mathbf{x}\rangle ,
f^{\partial}_1\langle \mathbf{x}\rangle,..., f^{\partial}_l\langle
\mathbf{x}\rangle\}=0\] then it will have to remain be true if one
substitutes $g^{-1}\partial$ for $\partial$ into it, where $g\in
GL^{\partial}(m,F)$. It implies that
\[p^{T^{-1}\partial}\{\mathbf{e}T^{-1}M^{\partial}\langle \mathbf{x}\rangle ,
f^{\partial}_1\langle \mathbf{x}\rangle,..., f^{\partial}_l\langle
\mathbf{x}\rangle\}=0\]  as far as $f^{\partial}_1\langle \mathbf{x}\rangle,...,
f^{\partial}_l\langle \mathbf{x}\rangle$ are invariant with respect to such
transformations. But  \[T^{-1}\partial = (M^{\partial}\langle
\mathbf{x}\rangle^{-1}T)^{-1}M^{\partial}\langle \mathbf{x}\rangle^{-1}\partial =
(M^{\partial}\langle \mathbf{x}\rangle^{-1}T)^{-1}\delta\] and for any
$S_0\in GL^{\delta}(m,F\langle \mathbf{x};\partial\rangle)$ the equation
$M^{\partial}\langle \mathbf{x}\rangle^{-1}T=S_0$ has a solution in
$GL^{\partial}(m, F\langle \mathbf{x}; \partial\rangle)$, namely
$T=M^{\partial}\langle \mathbf{x}\rangle S_0$. Therefore for the matrix
of variables $S= (s_{ij})_{i,j=1,2,...,m}$, for which
$\delta_i s_{jk}=\delta_j s_{ik}$ ,$i,j,k=1,2,...,m$, one has
\[p^{S^{-1}\delta}\{\mathbf{e}S, f^{\partial}_1\langle \mathbf{x}\rangle,..., f^{\partial}_l\langle \mathbf{x}\rangle\}=0.\]
Due to our assumption $f^{\partial}_1\langle \mathbf{x}\rangle,..., f^{\partial}_l\langle \mathbf{x}\rangle$ is $\delta$-algebraic independent system over $C$ and therefore according to Corollary 2.9, part \emph{b}),  $p^{\delta}\{t_1,...,t_{m+l}\}=0$ i.e. $p^{\partial}\{t_1,...,t_{m+l}\}=0.$ So the system
\[M^{\partial}_{11}\langle \mathbf{x}\rangle ,M^{\partial}_{12}\langle
\mathbf{x}\rangle,...,M^{\partial}_{1m}\langle \mathbf{x}\rangle,
f^{\partial}_1\langle \mathbf{x}\rangle,..., f^{\partial}_l\langle \mathbf{x}\rangle
\] has to be $\partial$-algebraic independent over $C$.

Vise versa, let $M^{\partial}_{11}\langle \mathbf{x}\rangle
,M^{\partial}_{12}\langle
\mathbf{x}\rangle,...,M^{\partial}_{1m}\langle \mathbf{x}\rangle,
f^{\partial}_1\langle \mathbf{x}\rangle,..., f^{\partial}_l\langle \mathbf{x}\rangle
$ be $\partial$-algebraic independent over $C$. In this case first
of all the system $f^{\partial}_1\langle \mathbf{x}\rangle,...,
f^{\partial}_l\langle \mathbf{x}\rangle $ is $\delta$-algebraic independent
over $C$. Indeed, $f^{\partial}_1\langle \mathbf{x}\rangle,...,
f^{\partial}_l\langle \mathbf{x}\rangle $ is $\partial$-algebraic
independent over  $C\langle \{M^{\partial}_{ij}\langle \mathbf{x}\rangle
\}_{i,j=1,2,...,m};\partial \rangle$ because of
$\partial_kM^{\partial}_{ij}\langle
\mathbf{x}\rangle=\partial_iM^{\partial}_{kj}\langle \mathbf{x}\rangle$ for
$i,j,k=1,2,...,m$ and $\partial$-algebraic independence
\[M^{\partial}_{11}\langle \mathbf{x}\rangle
,M^{\partial}_{12}\langle
\mathbf{x}\rangle,...,M^{\partial}_{1m}\langle \mathbf{x}\rangle,
f^{\partial}_1\langle \mathbf{x}\rangle,..., f^{\partial}_l\langle \mathbf{x}\rangle
\] over $C$. But every nonzero $\delta$-differential polynomial
$p^{\delta}\{t_1,...,t_l\}$ over $C\langle
\{M^{\partial}_{ij}\langle \mathbf{x}\rangle
\}_{i,j=1,2,...,m};\partial \rangle$ can be considered as a
nonzero $\partial$- polynomial over $C\langle
\{M^{\partial}_{ij}\langle \mathbf{x}\rangle
\}_{i,j=1,2,...,m};\partial \rangle$. Therefore the supposition \\
$p^{\delta}\{f^{\partial}_1\langle \mathbf{x}\rangle,...,
f^{\partial}_l\langle \mathbf{x}\rangle\}=0$ leads to a contradiction that
$f^{\partial}_1\langle \mathbf{x}\rangle,..., f^{\partial}_l\langle
\mathbf{x}\rangle $ is $\partial$-algebraic independent over $C\langle
\{M^{\partial}_{ij}\langle \mathbf{x}\rangle
\}_{i,j=1,2,...,m};\partial \rangle$.

Let us assume that  for some polynomial $p^{\delta}\{t_1,...,t_{m+l}\}$ over $C$ one has
\[p^{\delta}\{eM^{\partial}\langle \mathbf{x}\rangle
,f^{\partial}_1\langle \mathbf{x}\rangle,..., f^{\partial}_l\langle
\mathbf{x}\rangle \}=0.\] This equality should remain be true if one substitutes
$g^{-1}\partial$ for $\partial$ into it, where $g\in
GL^{\partial}(m,F)$, which leads to
$p^{\delta}\{\mathbf{e}T^{-1}M^{\partial}\langle \mathbf{x}\rangle
,f^{\partial}_1\langle \mathbf{x}\rangle,..., f^{\partial}_l\langle
\mathbf{x}\rangle \}=0$. But the equation $M^{\partial}\langle
\mathbf{x}\rangle^{-1}T=S_0$ has a solution in $GL^{\partial}(m, F\langle \mathbf{x};
\partial\rangle)$ for any $S_0\in GL^{\delta}(m,F\langle
\mathbf{x};\partial\rangle)$ and therefore
\[p^{\delta}\{\mathbf{e}S^{-1}, f^{\partial}_1\langle \mathbf{x}\rangle,..., f^{\partial}_l\langle \mathbf{x}\rangle\}=0.\]
Now take into consideration that $f^{\partial}_1\langle
\mathbf{x}\rangle,..., f^{\partial}_l\langle \mathbf{x}\rangle $ is
$\delta$-algebraic independent  over $C$ and Corollary 2.9, part \emph{a}) to
see that $p^{\delta}\{t_1,...,t_{m+l}\}=0$. So it is shown that
the system \[M^{\partial}_{11}\langle \mathbf{x}\rangle
,M^{\partial}_{12}\langle
\mathbf{x}\rangle,...,M^{\partial}_{1m}\langle \mathbf{x}\rangle,
f^{\partial}_1\langle \mathbf{x}\rangle,..., f^{\partial}_l\langle \mathbf{x}\rangle
\] is $\delta$- algebraic independent over $C$ if and only if it
is $\partial$-algebraic independent over $C$. In particular it
shows that the existence of $M^{\partial}\langle \mathbf{x}\rangle$ with
property $(3)$ implies that $m\leq n$ as far as we have shown the  $\delta$-
algebraic independence of the system $M^{\partial}_{11}\langle \mathbf{x}\rangle
,M^{\partial}_{12}\langle
\mathbf{x}\rangle,...,M^{\partial}_{1m}\langle \mathbf{x}\rangle$ over $C$.

It is evident that the system of components of the matrix
$M^{\partial}\langle \mathbf{x}\rangle^{-1}=(N^{\partial}_{ij}\langle
\mathbf{x}\rangle )_{i,j=1,2,...,m}$ generates $C\langle \mathbf{x};\partial
\rangle$ over $C\langle \mathbf{x}; \partial\rangle^{GL^{\partial}(m,F)}$
as a $\delta$-differential field and $\delta_k
N^{\partial}_{ij}\langle \mathbf{x}\rangle =\delta_i
N^{\partial}_{kj}\langle \mathbf{x}\rangle$ for all
$i,j,k=1,2,...,m $, which implies that
$\delta$-tr.deg.$C\langle \mathbf{x};\partial \rangle /C\langle
\mathbf{x};\partial\rangle^{GL^{\partial}(m,F)} \leq m$. But it already has
been established that $M^{\partial}_{11}\langle \mathbf{x}\rangle
,M^{\partial}_{12}\langle
\mathbf{x}\rangle,...,M^{\partial}_{1m}\langle \mathbf{x}\rangle$ is
$\delta$-algebraic independent over $C\langle
\mathbf{x};\partial\rangle^{GL^{\partial}(m,F)}$ and therefore in reality
\[\delta-\mbox{tr.deg.}C\langle \mathbf{x};\partial \rangle /C\langle
\mathbf{x};\partial\rangle^{GL^{\partial}(m,F)}= m.\]

As a $\partial$-differential field $C\langle \mathbf{x};\partial\rangle$
over $C$ is generated by the elements of $C\langle
\mathbf{x};\partial\rangle^{GL^{\partial}(m,F)}$, as far as $x_1,...,x_n$ are in $C\langle
\mathbf{x};\partial\rangle^{GL^{\partial}(m,F)}$
and \\ $\partial$-tr.deg.$C\langle \mathbf{x};\partial\rangle
/C=n.$ It means that one can find such a system \\
$f^{\partial}_1\langle \mathbf{x}\rangle,..., f^{\partial}_{n-m}\langle
\mathbf{x}\rangle $  of elements of $C\langle
\mathbf{x};\partial\rangle^{GL^{\partial}(m,F)}$ for which the system
\[M^{\partial}_{11}\langle \mathbf{x}\rangle ,M^{\partial}_{12}\langle \mathbf{x}\rangle,...,M^{\partial}_{1m}\langle \mathbf{x}\rangle , f^{\partial}_1\langle \mathbf{x}\rangle,..., f^{\partial}_{n-m}\langle \mathbf{x}\rangle \]
is $\partial$-algebraic independent over $C$. As it has been shown
that in this case it is $\delta$-algebraic independent over $C$ as
well. Therefore \[\delta-\mbox{tr.deg.}C\langle \mathbf{x};\partial\rangle /C =n\ \mbox{and}\
\delta-\mbox{tr.deg.}C\langle
\mathbf{x};\partial\rangle^{GL^{\partial}(m,F)}/C = n-m.\]

Now to prove Theorem 3.1 it is enough to show that
$x_1,...,x_n$ are $\delta$-algebraic over  $C\langle \mathbf{x};
\partial\rangle^{(GL^{\partial}(m,F),H)}.$ Note that $x_1,...,x_n$ are linear independent over $C$ and therefore due to \cite{Ko} there exist nontrivial $m$-tuples $\alpha^1,\alpha^2,...,\alpha^n$ with nonnegative integer entries such that
  \[\det [\delta^{\alpha^1}\mathbf{x};\delta^{\alpha^2}\mathbf{x};...,\delta^{\alpha^n}x]\neq 0.\]  So for any nonzero $\alpha \in W^n$ one can consider the
following differential equation in $y$:
\[\det [\delta^{\alpha^1}\mathbf{x};\delta^{\alpha^2}\mathbf{x};...,\delta^{\alpha^n}\mathbf{x}]^{-1}\det
[\delta^{\alpha^1}\overline{x},\delta^{\alpha^2}\overline{x},...,\delta^{\alpha^n}\overline{x}
,\delta^{\alpha}\overline{x}]=0,\]
where $\overline{x}=(x_1,x_2,...,x_n,y)$, $\delta^{\alpha}=\delta^{(\alpha_1,\alpha_2,...,\alpha_m)}$
stands for
$\delta_1^{\alpha_1}\delta_2^{\alpha_2}...\delta_m^{\alpha_m}.$
All coefficients of this differential equation belong to $C\langle
\mathbf{x}; \partial\rangle^{(GL^{\partial}(m,F),H)}$ and $y= x_i$ is a
solution for this linear differential equation at any
$i=\overline{1,n}$. It implies that indeed
$\delta$-tr.deg.$C\langle \mathbf{x};
\partial\rangle^{(GL^{\partial}(m,F),H)}/C= n-m$.\end{proof}

The following result states that one can obtain a system of
generators of $(C\langle \mathbf{x}\rangle^{(GL^{\partial}(m,F),H)}, \delta
)$ from the given system of generators of $(C\langle
\mathbf{x};\partial\rangle^{H},\partial )$.

\begin{theor}If $C\langle
\mathbf{x};\partial\rangle^{H}$ as a $\partial$-differential
field over $C$ is generated by a system
$(\varphi^{\partial}_{i}\langle \mathbf{x}\rangle)_{i=\overline{1,l}}$ then
$\delta$-differential field $C\langle \mathbf{x};
\partial\rangle^{(GL^{\partial}(m,F),H)}$ is generated over $C$
by the system $(\varphi^{\delta}_{i}\langle
\mathbf{x}\rangle)_{i=\overline{1,l}}$.\end{theor}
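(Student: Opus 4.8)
The plan is to exploit the structure already extracted in Theorem 3.1. Recall that $\delta=M^\partial\langle\mathbf{x}\rangle^{-1}\partial$ and that the entries $N^\partial_{ij}\langle\mathbf{x}\rangle$ of $M^\partial\langle\mathbf{x}\rangle^{-1}$ generate $C\langle\mathbf{x};\partial\rangle$ as a $\delta$-field over $C\langle\mathbf{x};\partial\rangle^{GL^\partial(m,F)}$, with $\delta_kN^\partial_{ij}=\delta_iN^\partial_{kj}$. I want to show that $K:=C\langle(\varphi^\delta_i\langle\mathbf{x}\rangle)_{i=\overline{1,l}};\delta\rangle$ equals $C\langle\mathbf{x};\partial\rangle^{(GL^\partial(m,F),H)}$. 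The inclusion $K\subseteq C\langle\mathbf{x};\partial\rangle^{(GL^\partial(m,F),H)}$ should be the easy direction: each $\varphi^\partial_i\langle\mathbf{x}\rangle$ is $H$-invariant by hypothesis, and because $M^\partial\langle\mathbf{x}\rangle$ satisfies the transformation rule $(3)$, one checks that $\varphi^\delta_i\langle\mathbf{x}\rangle$ is additionally $GL^\partial(m,F)$-invariant; since the defining property of $(GL^\partial,H)$-invariance is preserved under the $\delta$-field operations (the operators $\delta_j$ acting on an invariant produce invariants, this being exactly why $C\langle\mathbf{x};\partial\rangle^{(GL^\partial(m,F),H)}$ is a $\delta$-field), the whole $\delta$-field they generate lands inside. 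So the substance is the reverse inclusion.

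**The reverse inclusion.** Let $f^\partial\langle\mathbf{x}\rangle\in C\langle\mathbf{x};\partial\rangle^{(GL^\partial(m,F),H)}$. In particular it is $H$-invariant, hence $f^\partial\langle\mathbf{x}\rangle=R^\partial\{\varphi^\partial_1\langle\mathbf{x}\rangle,\dots,\varphi^\partial_l\langle\mathbf{x}\rangle\}$ for some $\partial$-differential rational expression $R$ over $C$ in the generators of $C\langle\mathbf{x};\partial\rangle^H$. Here the derivatives appearing are the $\partial_j$'s. The idea is to rewrite all the $\partial$-derivatives of the $\varphi_i$'s in terms of $\delta$-derivatives: since $\partial=M^\partial\langle\mathbf{x}\rangle\,\delta$ and the entries $M^\partial_{ij}\langle\mathbf{x}\rangle$ lie in $C\langle\mathbf{x};\partial\rangle^H$ (so they are themselves $\partial$-rational, hence $\delta$-rational, in the $\varphi_i$'s after iterating), one expresses $\partial_j\varphi_i$, $\partial_j\partial_k\varphi_i$, etc. as $\delta$-rational functions of the $\varphi^\delta_i\langle\mathbf{x}\rangle$ together with the $M^\partial_{ij}\langle\mathbf{x}\rangle$ and their $\delta$-derivatives. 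The key point: because $M^\partial_{ij}\langle\mathbf{x}\rangle\in C\langle\mathbf{x};\partial\rangle^H=C\langle(\varphi^\partial_i);\partial\rangle$, applying the same $\partial=M\delta$ substitution repeatedly (and using $\delta_kN_{ij}=\delta_iN_{kj}$ to keep everything consistent) shows $M^\partial_{ij}\langle\mathbf{x}\rangle\in C\langle(\varphi^\delta_i\langle\mathbf{x}\rangle);\delta\rangle=K$. Consequently $f^\partial\langle\mathbf{x}\rangle$ is a $\delta$-rational function of the $\varphi^\delta_i\langle\mathbf{x}\rangle$'s, i.e.\ $f^\partial\langle\mathbf{x}\rangle\in K$.

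**The main obstacle.** The delicate step is the bookkeeping that turns a $\partial$-differential rational expression into a $\delta$-differential rational expression without introducing the "gauge variables'' $s_{ij}$ or the matrix $T$: one must verify that the substitution $\partial_j=\sum_k M^\partial_{jk}\langle\mathbf{x}\rangle\,\delta_k$ can be iterated coherently, which requires precisely the compatibility conditions $\partial_kM^\partial_{ij}=\partial_iM^\partial_{kj}$ (equivalently $\delta_kN^\partial_{ij}=\delta_iN^\partial_{kj}$) assumed in $(3)$ and established in the proof of Theorem 3.1 — and here Theorem 2.3 (the $\odot$-product expansion of $\frac{\partial^{\odot m}}{m!}$ in terms of $\frac{\delta^{\odot k}}{k!}$ and the $\frac{\partial^{\odot j}}{j!}\odot g$) is the quantitative tool that makes the conversion explicit and shows the resulting coefficients stay inside $C\langle\mathbf{x};\partial\rangle^H$. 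A secondary point is showing $C\langle\mathbf{x};\partial\rangle^H$, when re-expressed via $\delta$, is generated by the \emph{same} finite system $(\varphi^\delta_i)$ — this is where one invokes that the $M^\partial_{ij}$ are themselves in $C\langle\mathbf{x};\partial\rangle^H$ so no new generators are needed. Once these conversions are in hand, combining with the transcendence-degree count from Theorem 3.1 (both fields have $\delta$-transcendence degree $n-m$ over $C$ and $K\subseteq C\langle\mathbf{x};\partial\rangle^{(GL^\partial(m,F),H)}$ with the larger field $\delta$-algebraic over $K$ by the linear differential equations exhibited there) closes the argument, since a $\delta$-algebraic extension of $\delta$-transcendence degree $0$ between finitely generated $\delta$-fields forces equality after checking there is no finite inseparable-type defect — automatic in characteristic zero.
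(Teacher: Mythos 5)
There is a genuine gap, and it sits exactly where the paper's actual argument lives. Your reverse inclusion writes $f^{\partial}\langle\mathbf{x}\rangle=R^{\partial}\{\varphi^{\partial}_{1}\langle\mathbf{x}\rangle,\dots,\varphi^{\partial}_{l}\langle\mathbf{x}\rangle\}$ and then converts $\partial$ into $\delta$ via $\partial=M^{\partial}\langle\mathbf{x}\rangle\delta$. But that procedure only exhibits $f$ as a $\delta$-rational function of the elements $\varphi^{\partial}_{i}\langle\mathbf{x}\rangle$ and of the entries $M^{\partial}_{ij}\langle\mathbf{x}\rangle$ --- not of the $\varphi^{\delta}_{i}\langle\mathbf{x}\rangle$ --- and neither family lies in $K=C\langle(\varphi^{\delta}_{i}\langle\mathbf{x}\rangle)_{i=\overline{1,l}};\delta\rangle$. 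In particular your ``key point'' that $M^{\partial}_{ij}\langle\mathbf{x}\rangle\in K$ is false: by $(3)$ one has $M^{g^{-1}\partial}\langle\mathbf{x}\rangle=g^{-1}M^{\partial}\langle\mathbf{x}\rangle$, so the $M^{\partial}_{ij}$ are $H$-invariant but not $GL^{\partial}(m,F)$-invariant, hence cannot belong to $K$ once you grant your own easy inclusion $K\subseteq C\langle\mathbf{x};\partial\rangle^{(GL^{\partial}(m,F),H)}$; what is true is only $M^{\delta}\langle\mathbf{x}\rangle=E_{m}$. The same objection applies to the $\varphi^{\partial}_{i}$ themselves. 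The missing idea is precisely what the paper supplies: for an irreducible representation $P^{\partial}\{\mathbf{x}\}/Q^{\partial}\{\mathbf{x}\}$ of a $GL^{\partial}(m,F)$-invariant element of $C\langle\mathbf{x};\partial\rangle^{H}$, invariance forces $P^{g^{-1}\partial}\{\mathbf{x}\}=\chi^{\partial}\langle g\rangle P^{\partial}\{\mathbf{x}\}$ and $Q^{g^{-1}\partial}\{\mathbf{x}\}=\chi^{\partial}\langle g\rangle Q^{\partial}\{\mathbf{x}\}$ with a common multiplier satisfying the cocycle identity $\chi^{\partial}\langle T\rangle=\chi^{\partial}\langle S\rangle\chi^{S^{-1}\partial}\langle S^{-1}T\rangle$, which shows $\chi$ never vanishes; specializing the generic gauge to $M^{\partial}\langle\mathbf{x}\rangle$ then yields $f^{\partial}\langle\mathbf{x}\rangle=f^{\delta}\langle\mathbf{x}\rangle$. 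With that identity in hand the substitution $\partial\mapsto\delta$ applied to the whole relation $f^{\partial}=R^{\partial}\{\varphi^{\partial}_{1},\dots,\varphi^{\partial}_{l}\}$ gives $f^{\partial}=f^{\delta}=R^{\delta}\{\varphi^{\delta}_{1},\dots,\varphi^{\delta}_{l}\}\in K$, and no bookkeeping via the $\odot$-expansion (Theorem 2.4, not 2.3) is needed.

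Your fallback through transcendence degrees also does not close the argument. Equality of $\delta$-transcendence degrees together with $\delta$-algebraicity does not force equality of fields --- already in the non-differential, characteristic-zero setting $C(t^{2})\subset C(t)$ is a proper algebraic extension with equal transcendence degree. Moreover, the linear differential equations exhibited in the proof of Theorem 3.1 have coefficients in $C\langle\mathbf{x};\partial\rangle^{(GL^{\partial}(m,F),H)}$, not demonstrably in $K$, so they show $x_{1},\dots,x_{n}$ are $\delta$-algebraic over the invariant field but not over $K$; using them for your purpose would presuppose the very inclusion you are trying to prove.
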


\begin{proof} Let an irreducible $\frac{P^{\partial}\{
\mathbf{x}\}}{Q^{\partial}\{ \mathbf{x}\}}\in C\langle \mathbf{x};\partial\rangle^{H}$ be
$GL^{\partial}(m,F)$ -invariant. It means that for any $g\in
GL^{\partial}(m,F)$ one has the equality
\[P^{g^{-1}\partial}\{ \mathbf{x}\}Q^{\partial}\{\mathbf{x}\} =P^{\partial}\{\mathbf{ x}\}Q^{g^{-1}\partial}\{ \mathbf{x}\}\]
Therefore $P^{g^{-1}\partial}\{ \mathbf{x}\} = P^{\partial}\{\mathbf{x}\} \chi^{\partial}\langle g \rangle$.
 The function $\chi^{\partial}\langle T \rangle$( a "character" of $GL^{\partial}(m,F)$)
  has the following property
\[\chi^{\partial}\langle g_1g_2 \rangle =
\chi^{\partial}\langle g_1 \rangle\chi^{g^{-1}_1\partial}\langle
g_2 \rangle,\] for any $g_1 \in GL^{\partial}(m,F)$ and $g_2 \in
GL^{g^{-1}_1\partial}(m,F)$. But due to $(2)$ one has $g_2=
g^{-1}_1g$ for some $g\in GL^{\partial}(m,F)$ therefore
\[\chi^{\partial}\langle g\rangle =\chi^{\partial}\langle g_1
\rangle\chi^{g^{-1}_1\partial}\langle g^{-1}_1g \rangle,\] for any $g_1,g \in GL^{\partial}(m,F)$.
It implies that
\[\chi^{\partial}\langle T\rangle =\chi^{\partial}\langle S \rangle\chi^{S^{-1}\partial}\langle S^{-1}T \rangle,\] for any $ T=(t_{ij})_{i,j=
1,2,...,m}, S=(s_{ij})_{i,j= 1,2,...,m}$,  for which $
\partial_{k}t_{ij}=
 \partial_{i}t_{kj}, \partial_{k}s_{ij}=
 \partial_{i}s_{kj}$  at  $i,j,k= 1,2,...,m $.
The last equality guarantees that the function
$\chi^{\partial}\langle T\rangle $ can not vanish. Therefore
$\frac{P^{\partial}\{\mathbf{x}\}}{Q^{\partial}\{ \mathbf{x}\}}=\frac{P^{\delta}\{
\mathbf{x}\}}{Q^{\delta}\{ \mathbf{x}\}}$. \end{proof}

Let us assume that \[C\langle \mathbf{x};\partial \rangle^H=
C\langle\varphi^{\partial}_1\langle \mathbf{x}\rangle,
\varphi^{\partial}_2\langle
\mathbf{x}\rangle,...,\varphi^{\partial}_{l}\langle \mathbf{x}\rangle;\partial
\rangle.\] As far as all components of the matrix
$M^{\partial}\langle \mathbf{x}\rangle $ belong to   $C\langle
\mathbf{x};\partial \rangle^H$ it can be represented in the form
$M^{\partial}\langle \mathbf{x}\rangle=$ \[\overline{M}^{\partial}\langle \varphi^{\partial}_1\langle \mathbf{x}\rangle, \varphi^{\partial}_2\langle \mathbf{x}\rangle,...,\varphi^{\partial}_{l}\langle \mathbf{x}\rangle\rangle =(\overline{M}^{\partial}_{ij}\langle \varphi^{\partial}_1\langle \mathbf{x}\rangle, \varphi^{\partial}_2\langle \mathbf{x}\rangle,...,\varphi^{\partial}_{l}\langle \mathbf{x}\rangle\rangle )_{i,j=1,2,...,m},\] where $\overline{M}^{\partial}_{ij}\langle t_1,t_2,....,t_{l}\rangle \in C\langle t_1,t_2,....,t_{l};\partial \rangle$.
Therefore due to $M^{\delta}\langle \mathbf{x}\rangle= E_m$ one has
\[\overline{M}^{\delta}\langle \varphi^{\delta}_1\langle
\mathbf{x}\rangle, \varphi^{\delta}_2\langle
\mathbf{x}\rangle,...,\varphi^{\delta}_{l}\langle \mathbf{x}\rangle\rangle =E_m. \]

\begin{rem} The equality \[\chi^{\partial}\langle g\rangle
=\chi^{\partial}\langle g_1 \rangle\chi^{g^{-1}_1\partial}\langle
g^{-1}_1g \rangle,\] for any $g_1,g \in GL^{\partial}(m,F)$
resembles the property of character of the group $GL(m,F)$.
Therefore $\chi^{\partial}\langle T\rangle$ for which the above
equality is valid can be considered as a character of the groupoid
$GL^{\partial}(m,F)$. Description all such characters is an
interesting problem. Of course, $\chi^{\partial}\langle g\rangle=
\det(g)^k$, where $k$ is any integer number, are examples of such
characters. Are they all possible differential rational characters
of $GL^{\partial}(m,F)$? \end{rem}

\begin{theor}Any $\delta$-differential polynomial
relation over $C$ of the system
 $\varphi^{\delta}_1\langle \mathbf{x}\rangle, \varphi^{\delta}_2\langle \mathbf{x}\rangle,...,\varphi^{\delta}_{l}\langle \mathbf{x}\rangle $ is a
consequence of $\partial$-differential polynomial relations of the
system $\varphi^{\partial}_1\langle \mathbf{x}\rangle,
\varphi^{\partial}_2\langle
\mathbf{x}\rangle,...,\varphi^{\partial}_{l}\langle \mathbf{x}\rangle $ over $C$ and
the relations $\overline{M}^{\delta}\langle
\varphi^{\delta}_1\langle \mathbf{x}\rangle, \varphi^{\delta}_2\langle
\mathbf{x}\rangle,...,\varphi^{\delta}_{l}\langle \mathbf{x}\rangle\rangle =E_m.$\end{theor}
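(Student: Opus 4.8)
The idea is to translate $\delta$-differential relations among the $\varphi^\delta_i\langle\mathbf x\rangle$ into $\partial$-differential relations among the $\varphi^\partial_i\langle\mathbf x\rangle$ by the substitution machinery of Section~2, using that $M^\partial\langle\mathbf x\rangle=\overline M^\partial\langle\varphi^\partial\langle\mathbf x\rangle\rangle$ is the gauge carrying $\partial$ to $\delta$. Let $\mathfrak p\subset C\{t_1,\dots,t_l;\partial\}$ be the prime $\partial$-differential ideal of all $\partial$-differential polynomial relations over $C$ of $\varphi^\partial_1\langle\mathbf x\rangle,\dots,\varphi^\partial_l\langle\mathbf x\rangle$, and $\mathfrak q\subset C\{t_1,\dots,t_l;\delta\}$ the prime $\delta$-differential ideal of all such relations of $\varphi^\delta_1\langle\mathbf x\rangle,\dots,\varphi^\delta_l\langle\mathbf x\rangle$ (its quotient field being $C\langle\mathbf x;\partial\rangle^{(GL^\partial(m,F),H)}$ by Theorem~3.2). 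Write $\mathfrak p^\delta$ for the image of $\mathfrak p$ under the relabelling $\partial^\alpha t_i\mapsto\delta^\alpha t_i$, and $\mathfrak m$ for the $\delta$-differential ideal generated by the entries of $\overline M^\delta\langle t\rangle-E_m$. The theorem says that $\mathfrak q$ is the radical $\delta$-differential ideal generated by $\mathfrak p^\delta$ and $\mathfrak m$, i.e. $\mathfrak p^\delta+\mathfrak m\subseteq\mathfrak q\subseteq\sqrt{\mathfrak p^\delta+\mathfrak m}$.

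The inclusion $\mathfrak p^\delta+\mathfrak m\subseteq\mathfrak q$ is immediate: $\overline M^\delta\langle\varphi^\delta\langle\mathbf x\rangle\rangle=E_m$ has already been noted above, and for $P\in\mathfrak p$, since $x_1,\dots,x_n$ are $\partial$-differentially algebraically independent over $C$ and the composite differential rational function $P\circ\varphi$ in $x_1,\dots,x_n$ satisfies $(P\circ\varphi)^\partial\langle\mathbf x\rangle=P^\partial\{\varphi^\partial\langle\mathbf x\rangle\}=0$, it must be identically zero, whence $P^\delta\{\varphi^\delta\langle\mathbf x\rangle\}=(P\circ\varphi)^\delta\langle\mathbf x\rangle=0$ and the relabelled $P$ lies in $\mathfrak q$. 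For the reverse inclusion it suffices, passing to generic points of the minimal primes over $\mathfrak p^\delta+\mathfrak m$, to show that every tuple $(\eta_1,\dots,\eta_l)$ in a $\delta$-differential field over $C$ with $P^\delta\{\eta\}=0$ for all $P\in\mathfrak p$ and $\overline M^\delta\langle\eta\rangle=E_m$ satisfies every relation of $\varphi^\delta_1\langle\mathbf x\rangle,\dots,\varphi^\delta_l\langle\mathbf x\rangle$; equivalently, the differential homomorphism $C\{t;\partial\}/(\mathfrak p+[\,\text{entries of }\overline M^\partial\langle t\rangle-E_m\,]_\partial)\to C\langle\mathbf x;\partial\rangle$, $t_i\mapsto\varphi^\delta_i\langle\mathbf x\rangle$ — well defined because the generators of the ideal map to $0$ — is injective.

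To prove that injectivity, take a relation $R^\delta\{\varphi^\delta\langle\mathbf x\rangle\}=0$ and rewrite it as $(R\circ\varphi)^{g^{-1}\partial}\langle\mathbf x\rangle=0$ with $g=M^\partial\langle\mathbf x\rangle=\overline M^\partial\langle\varphi^\partial\langle\mathbf x\rangle\rangle$. Replace the single gauge $g$ by a generic $GL^\partial$-matrix $T=(t_{ij})$ subject to $\partial_k t_{ij}=\partial_i t_{kj}$ (available by Propositions~2.7 and~2.8). Since the $x_i$ are $\partial$-differentially algebraically independent over $C\langle T;\partial\rangle$, the differential specialization $t_{ij}\mapsto\overline M^\partial_{ij}\langle\varphi^\partial\langle\mathbf x\rangle\rangle$ is legitimate and has prime differential kernel generated by the entries of $T-\overline M^\partial\langle\varphi^\partial\langle\mathbf x\rangle\rangle$: its quotient is the domain $C\langle\mathbf x;\partial\rangle$, the constraints $\partial_k t_{ij}=\partial_i t_{kj}$ passing over to $\partial_k M^\partial_{ij}\langle\mathbf x\rangle=\partial_i M^\partial_{kj}\langle\mathbf x\rangle$ by the cocycle identity~$(3)$. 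Because $(R\circ\varphi)^{T^{-1}\partial}\langle\mathbf x\rangle$ maps to $0$ under this specialization, it lies in that differential ideal; since $(R\circ\varphi)^{T^{-1}\partial}\langle\mathbf x\rangle$ and every $\overline M^\partial_{ij}\langle\varphi^\partial\langle\mathbf x\rangle\rangle$ are $H$-invariant, hence expressible through the $\varphi^\partial_j\langle\mathbf x\rangle$, reading the resulting identity through the presentation $C\langle\mathbf x;\partial\rangle^H=C\langle\varphi^\partial\langle\mathbf x\rangle;\partial\rangle$ and relabelling $\partial\leftrightarrow\delta$ exhibits $R^\delta\{t\}$ inside $\mathfrak p^\delta+\mathfrak m$ (and, since imposing $\overline M^\partial\langle t\rangle=E_m$ modulo $\mathfrak p$ is this kind of graph-type specialization, one in fact gets exact equality $\mathfrak q=\mathfrak p^\delta+\mathfrak m$).

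The main obstacle is precisely this last descent — propagating the vanishing of $(R\circ\varphi)^\delta\langle\mathbf x\rangle$, known only for the one gauge $g=M^\partial\langle\mathbf x\rangle$, to membership in the differential ideal generated by $\overline M^\partial\langle t\rangle-E_m$ modulo $\mathfrak p$ — together with the attendant bookkeeping of re-expressing $(R\circ\varphi)^{T^{-1}\partial}\langle\mathbf x\rangle$ through the $\varphi^\partial_j\langle\mathbf x\rangle$ and $T$. It is here that one genuinely needs both the substitution principle (the elementary fact opening Section~3 together with Corollary~2.9), which is what permits replacing $M^\partial\langle\mathbf x\rangle$ by the generic matrix $T$ because the $x_i$ are $\partial$-differentially algebraically independent over $C$, and the cocycle identity~$(3)$, $M^{g^{-1}\partial}\langle\mathbf xh\rangle=g^{-1}M^\partial\langle\mathbf x\rangle$, which is what makes the constraint $T=\overline M^\partial\langle\varphi^\partial\langle\mathbf x\rangle\rangle$ differentially consistent so that its zero set is integral. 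Everything else is routine differential algebra.
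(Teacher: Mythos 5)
Your overall strategy coincides with the paper's: replace the single gauge $g=M^{\partial}\langle\mathbf{x}\rangle$ by a generic matrix $T$ of differential indeterminates subject to $\partial_k t_{ij}=\partial_i t_{kj}$, use $H$-invariance to rewrite the resulting function of $T$ through the generators $\varphi^{\partial}_j\langle\mathbf{x}\rangle$, and exploit the fact that its numerator vanishes on the graph $T=\overline{M}^{\partial}\langle\varphi^{\partial}\langle\mathbf{x}\rangle\rangle$. The easy inclusion and the case $N^{\partial}\{\varphi^{\partial}\langle\mathbf{x}\rangle\}=0$ are handled as in the paper. But at the decisive step your argument stops exactly where the work begins. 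Having written $N^{T^{-1}\partial}\{\varphi^{T^{-1}\partial}\langle\mathbf{x}\rangle\}$ as a fraction $a^{\partial}_{x}\{T\}/b^{\partial}_{x}\{T\}$ whose numerator lies in the differential ideal generated by the entries of $T-\overline{M}^{\partial}\langle\varphi^{\partial}\langle\mathbf{x}\rangle\rangle$, you say that ``reading the identity through the presentation and relabelling $\partial\leftrightarrow\delta$'' exhibits $R^{\delta}\{t\}$ inside $\mathfrak{p}^{\delta}+\mathfrak{m}$. That relabelling is not available: the identity you hold involves the mixed objects $\varphi^{T^{-1}\partial}\langle\mathbf{x}\rangle$ and the derivations $T^{-1}\partial$, and is not of the form ``$R^{\delta}$ applied to something in the $t$-variables''. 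The missing move --- which is the heart of the paper's proof --- is the specialization $T=E_m$: it turns $T^{-1}\partial$ back into $\partial$ and $\varphi^{T^{-1}\partial}\langle\mathbf{x}\rangle$ back into $\varphi^{\partial}\langle\mathbf{x}\rangle$, so that the expansion of the numerator around the graph, evaluated at $E_m$, produces a genuine $\partial$-relation $\overline{N}^{\partial}\langle\varphi^{\partial}\langle\mathbf{x}\rangle\rangle=0$ of the system $\varphi^{\partial}_j\langle\mathbf{x}\rangle$, while the difference $N^{\delta}\{t\}-\overline{N}^{\delta}\langle t\rangle$ is precisely a zero-constant-term expansion in the entries of $E_m-\overline{M}^{\delta}\langle t\rangle$ and hence belongs to $\mathfrak{m}$. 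Without this evaluation your ideal $\mathfrak{m}$, which is built from $\overline{M}^{\delta}\langle t\rangle-E_m$, never actually enters the computation, even though you correctly identify this descent as the main obstacle.

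A secondary gap: you assert that the coefficients of $a^{\partial}_{x}\{T\}$ and $b^{\partial}_{x}\{T\}$ are $H$-invariant, hence expressible through the $\varphi^{\partial}_j\langle\mathbf{x}\rangle$, merely because the whole function is $H$-invariant. Invariance of the quotient only gives $a^{\partial}_{xh}\{T\}=\chi\,a^{\partial}_{x}\{T\}$ and $b^{\partial}_{xh}\{T\}=\chi\,b^{\partial}_{x}\{T\}$ for some factor $\chi$ depending on $h$; one must first normalize the leading coefficient of $b^{\partial}_{x}\{T\}$ to one and compare leading terms to force $\chi=1$, which is exactly the character argument the paper carries out. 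Finally, your parenthetical claim of exact equality $\mathfrak{q}=\mathfrak{p}^{\delta}+\mathfrak{m}$ is stronger than the theorem and is not supported by what you wrote: the construction only places $N^{\delta}$ in the set of consequences of $\mathfrak{p}^{\delta}$ and $\mathfrak{m}$ after inverting the denominators $\overline{b}^{\delta}_{t}\{E_m\}$, and you give no argument that the graph-type ideal remains prime, or that no radical is needed, in the differential-polynomial setting.
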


\begin{proof} Let $N^{\delta}\{\varphi^{\delta}_1\langle
\mathbf{x}\rangle, \varphi^{\delta}_2\langle
\mathbf{x}\rangle,...,\varphi^{\delta}_{l}\langle \mathbf{x}\rangle \} = 0$, where
$N^{\partial}\{ t_1,t_2,...,t_{l} \} \in C\{
t_1,t_2,...,t_{l},\partial\}$.

If $N^{\partial}\{\varphi^{\partial}_1\langle \mathbf{x}\rangle,
\varphi^{\partial}_2\langle
\mathbf{x}\rangle,...,\varphi^{\partial}_{l}\langle \mathbf{x}\rangle \} = 0$ then
it means that the above relation ($N^{\delta}\{
t_1,t_2,...,t_{l} \}$) of the system $\varphi^{\delta}_1\langle
\mathbf{x}\rangle, \varphi^{\delta}_2\langle
\mathbf{x}\rangle,...,\varphi^{\delta}_{l}\langle \mathbf{x}\rangle $ is a
consequence of the relation ($N^{\partial}\{ t_1,t_2,...,t_{l}
\}$) of the system $\varphi^{\partial}_1\langle \mathbf{x}\rangle,
\varphi^{\partial}_2\langle
\mathbf{x}\rangle,...,\varphi^{\partial}_{l}\langle \mathbf{x}\rangle $ i.e. it is
obtained by substitution $\delta$ for $\partial$ in
$N^{\partial}\{ t_1,t_2,...,t_{l} \}$.

If $N^{\partial}\{\varphi^{\partial}_1\langle \mathbf{x}\rangle,
\varphi^{\partial}_2\langle
\mathbf{x}\rangle,...,\varphi^{\partial}_{l}\langle \mathbf{x}\rangle \} \neq 0$
then consider\\
$N^{T^{-1}\partial}\{\varphi^{T^{-1}\partial}_1\langle
\mathbf{x}\rangle, \varphi^{T^{-1}\partial}_2\langle
\mathbf{x}\rangle,...,\varphi^{T^{-1}\partial}_{l}\langle \mathbf{x}\rangle \}$ as a
$\partial$-differential rational function in  variables
$T=(t_{ij})_{i,j=1,2,...,m}$, where $\partial_kt_{ij}
=\partial_it_{kj}$ for any $i,j,k=1,2,...,m$ over $C\langle
\mathbf{x};\partial\rangle$. Let $\frac{a^{\partial}_x\{
T\}}{b^{\partial}_x\{ T\}}$ be its irreducible representation and
the leading coefficient (with respect to some linear order) of
$b^{\partial}_x\{T\}$ be one. We show that in this case all
coefficients of $a^{\partial}_x\{ T\}, b^{\partial}_x\{T\}$ belong
to $C\langle \mathbf{x};\partial\rangle^H$.

Indeed, first of all
$N^{T^{-1}\partial}\{\varphi^{T^{-1}\partial}_1\langle
\mathbf{x}\rangle, \varphi^{T^{-1}\partial}_2\langle
\mathbf{x}\rangle,...,\varphi^{T^{-1}\partial}_{l}\langle \mathbf{x}\rangle \}$, as
a differential rational function in $\mathbf{x}$, is $H$- invariant
function, as much as $\varphi^{\partial}_i\langle \mathbf{x}\rangle \in
C\langle \mathbf{x};\partial\rangle^H$. This $H$-invariantness implies that
\[a^{\partial}_x\{T\}b^{\partial}_{xh}\{ T\}= b^{\partial}_x\{T\}a^{\partial}_{xh}\{T\}\] for any $h\in H$. Therefore
$b^{\partial}_{xh}\{T\}= \chi^{\partial}\langle
\mathbf{x};h\rangle b^{\partial}_x\{T\}$. But comparision of the
leading terms of both sides implies that in reality
$\chi^{\partial}\langle \mathbf{x};h\rangle= 1$ which in its turn
implies that all coefficients of $b^{\partial}_x\{T\}$ (as well as
$a^{\partial}_x\{T\}$) are $H$- invariant.

Therefore all coefficients of $a^{\partial}_x\{T\}$,
$b^{\partial}_x\{T\}$ can be considered as
${\partial}$-differential rational functions in
$\varphi^{\partial}_1\langle \mathbf{x}\rangle, \varphi^{\partial}_2\langle
\mathbf{x}\rangle,...,\varphi^{\partial}_{l}\langle \mathbf{x}\rangle$ and let $b^{\partial}_x\{T\}=
\overline{b}_{\varphi^{\partial}_1\langle \mathbf{x}\rangle,
\varphi^{\partial}_2\langle
\mathbf{x}\rangle,...,\varphi^{\partial}_{l}\langle \mathbf{x}\rangle}\{T\}$. Now
represent the numerator $a^{\partial}_x\{T\} $ as a
${\partial}$-differential polynomial function in $t_{ij}-
\overline{M}_{ij}^{\partial}\langle \varphi^{\partial}_1\langle
\mathbf{x}\rangle, \varphi^{\partial}_2\langle
\mathbf{x}\rangle,...,\varphi^{\partial}_{l}\langle \mathbf{x}\rangle\rangle $,where
$i,j=1,2,...,m$, for example, let
\[a^{\partial}_x\{T\}=\overline{a}^{\partial}_{\varphi^{\partial}_1\langle
\mathbf{x}\rangle, \varphi^{\partial}_2\langle
\mathbf{x}\rangle,...,\varphi^{\partial}_{l}\langle \mathbf{x}\rangle}\{T-
\overline{M}^{\partial}\langle \varphi^{\partial}_1\langle
\mathbf{x}\rangle, \varphi^{\partial}_2\langle
\mathbf{x}\rangle,...,\varphi^{\partial}_{l}\langle \mathbf{x}\rangle \rangle\}.\]
As such polynomial its constant term is zero because of\\
$N^{\delta}\{\varphi^{\delta}_1\langle \mathbf{x}\rangle,
\varphi^{\delta}_2\langle \mathbf{x}\rangle,...,\varphi^{\delta}_{l}\langle
\mathbf{x}\rangle \} = 0$. So
\[N^{T^{-1}{\partial}}\{\varphi^{T^{-1}{\partial}}_1\langle
\mathbf{x}\rangle, \varphi^{T^{-1}{\partial}}_2\langle
\mathbf{x}\rangle,...,\varphi^{T^{-1}{\partial}}_{l}\langle \mathbf{x}\rangle \}
=\]
\[\frac{\overline{ a}^{\partial}_{\varphi^{\partial}_1\langle
\mathbf{x}\rangle, \varphi^{\partial}_2\langle
\mathbf{x}\rangle,...,\varphi^{\partial}_{l}\langle
\mathbf{x}\rangle}\{T-\overline{M}^{\partial}\langle
\varphi^{\partial}_1\langle \mathbf{x}\rangle, \varphi^{\partial}_2\langle
\mathbf{x}\rangle,...,\varphi^{\partial}_{l}\langle \mathbf{x}\rangle \rangle
\}}{\overline{b}^{\partial}_{\varphi^{\partial}_1\langle \mathbf{x}\rangle,
\varphi^{\partial}_2\langle
\mathbf{x}\rangle,...,\varphi^{\partial}_{l}\langle \mathbf{x}\rangle}\{T\}}.\]

Substitution $T= E_m$ implies that
$N^{{\partial}}\{\varphi^{{\partial}}_1\langle \mathbf{x}\rangle, \varphi^{{\partial}}_2\langle \mathbf{x}\rangle,...,\varphi^{{\partial}}_{l}\langle \mathbf{x}\rangle \} =$
\[\frac{\overline{
a}^{\partial}_{\varphi^{\partial}_1\langle \mathbf{x}\rangle,
\varphi^{\partial}_2\langle
\mathbf{x}\rangle,...,\varphi^{\partial}_{l}\langle
\mathbf{x}\rangle}\{E_m-\overline{M}^{\partial}\langle
\varphi^{\partial}_1\langle \mathbf{x}\rangle, \varphi^{\partial}_2\langle
\mathbf{x}\rangle,...,\varphi^{\partial}_{l}\langle \mathbf{x}\rangle \rangle
\}}{\overline{b}^{\partial}_{\varphi^{\partial}_1\langle \mathbf{x}\rangle,
\varphi^{\partial}_2\langle
\mathbf{x}\rangle,...,\varphi^{\partial}_{l}\langle \mathbf{x}\rangle}\{E_m\}}.\]

Now consider the following $\delta$-differential rational function
over $C$:
\[\overline{N}^{\delta}\langle t_1,t_2,...,t_m \rangle=
N^{\delta}\{ t_1,t_2,...,t_{l}\}-\frac{\overline{
a}^{\delta}_{t_1,
t_2,...,t_{l}}\{E_m-\overline{M}^{\delta}\langle
t_1,t_2,...,t_{l} \rangle
\}}{\overline{b}^{\delta}_{t_1,t_2,...,t_{l}}\{E_m\}}.\] For this
function one has
$\overline{N}^{\delta}\langle \varphi^{\delta}_1\langle \mathbf{x}\rangle, \varphi^{\delta}_2\langle \mathbf{x}\rangle,...,\varphi^{\delta}_{l}\langle \mathbf{x}\rangle \rangle= 0$
as well as $ \quad \overline{N}^{\partial}\langle
\varphi^{\partial}_1\langle \mathbf{x}\rangle, \varphi^{\partial}_2\langle
\mathbf{x}\rangle,...,\varphi^{\partial}_{l}\langle \mathbf{x}\rangle\rangle = 0.$
The last equality(relation) means that it is a
consequence of relations of the system
$\varphi^{\partial}_1\langle \mathbf{x}\rangle, \varphi^{\partial}_2\langle
\mathbf{x}\rangle,...,\varphi^{\partial}_{l}\langle \mathbf{x}\rangle $. \end{proof}

\section{Construction of the systems of invariant partial differential operators}

In this section we discuss a construction of the matrix $M^{\partial}\langle \mathbf{x}\rangle$ with property $(3)$ for the given sub grouppoid $G$ of $GL^{\partial}(m,F)$ and subgroup $ H$ of $GL(n,C)$.

\begin{de} An element $f^{\partial}\langle
\mathbf{x}\rangle \in C\langle \mathbf{x}; \partial\rangle$ is said to be
 $(G,H)$- relative invariant
 if there exist integer numbers $k,l$ such that the equality
\[f^{g{-1}\partial}\langle \mathbf{x}h\rangle =\det(g)^k\det(h)^l f^{\partial}\langle
\mathbf{x}\rangle \] holds true for any $g\in
G, h\in H$.\end{de}

If $f^{\partial}\langle \mathbf{x}\rangle $ is such a nonzero relative invariant then by applying $\partial$ to the both sides of the equality \[f^{g{-1}\partial}\langle \mathbf{x}h\rangle =det(g)^kdet(h)^l f^{\partial}\langle \mathbf{x}\rangle, \] one gets \[g\delta\odot f^{\delta}\langle \mathbf{y}\rangle= det(h)^lkdet(g)^{k-1}(\partial\odot det(g)) f^{\partial}\langle \mathbf{x}\rangle+det(h)^ldet(g)^{k}(\partial\odot f^{\partial}\langle \mathbf{x}\rangle),\] where $\delta=g^{-1}\partial$, $\mathbf{y}=\mathbf{x}h$. Hence, assuming $k\neq0$, one has \[g\frac{\delta\odot f^{\delta}\langle \mathbf{y}\rangle}{kf^{\delta}\langle \mathbf{y}\rangle}= \frac{\partial\odot det(g)}{det(g)}+\frac{\partial\odot f^{\partial}\langle \mathbf{x}\rangle}{kf^{\partial}\langle \mathbf{x}\rangle}.\] If $\varphi^{\partial}\langle \mathbf{x}\rangle$ is another similar relative invariant with $k=k_1\neq 0, l=l_1$ then for the column vector \[X^{\partial}\langle \mathbf{x}\rangle=\frac{\partial\odot f^{\partial}\langle \mathbf{x}\rangle}{kf^{\partial}\langle \mathbf{x}\rangle}-\frac{\partial\odot \varphi^{\partial}\langle \mathbf{x}\rangle}{k_1\varphi^{\partial}\langle \mathbf{x}\rangle}\] one has
\[X^{\partial}\langle \mathbf{x}\rangle=gX^{\delta}\langle \mathbf{y}\rangle\]  for any $g\in G$.

If $k=0$ or $G\subset SL^{\partial}(m,F)$ then for $X^{\partial}\langle \mathbf{x}\rangle$  one can take $\frac{\partial\odot f^{\partial}\langle \mathbf{x}\rangle}{f^{\partial}\langle \mathbf{x}\rangle}$. One can try to construct the matrix $M^{\partial}\langle \mathbf{x}\rangle$ by the use of such column vectors.

Note that due to Theorem 2.4 one has the following matrix representations:
\[\left(
\begin{array}{c}
  1\\
  \partial\\
  \frac{\partial^{\odot 2}}{2!} \\
  \vdots\\
  \frac{\partial^\odot k}{k!}\\
\end{array}
\right)=\left(
\begin{array}{cccccc}
1&0&0&0&\cdots &0\\
0&  g & 0 & 0 &\cdots & 0\\
0&\frac{\partial\odot g}{2!}& \frac{g^{\odot 2}}{2!}&0&\cdots&0\\
\vdots&  \vdots&\vdots&\vdots&\cdots&0\\
0&\frac{\partial^{\odot k-1}\odot g}{k!}& *&*&\cdots& \frac{g^{\odot k}}{k!}\\
\end{array}
\right)
\left(
\begin{array}{c}
1\\
  \delta\\
  \frac{\delta^{\odot 2}}{2!} \\
  \vdots\\
  \frac{\delta^{\odot k}}{k!}\\
\end{array}
\right),\]
  \[\left(
\begin{array}{c}
    \partial\\
  \frac{\partial^{\odot 2}}{2!} \\
  \vdots\\
  \frac{\partial^\odot k}{k!}\\
\end{array}
\right)=\left(
\begin{array}{ccccc}
g & 0 & 0 &\cdots & 0\\
\frac{\partial\odot g}{2!}& \frac{g^{\odot 2}}{2!}&0&\cdots&0\\
\vdots&\vdots&\vdots&\cdots&0\\
\frac{\partial^{\odot k-1}\odot g}{k!}& *&*&\cdots& \frac{g^{\odot k}}{k!}\\
\end{array}
\right)
\left(
\begin{array}{c}
  \delta\\
  \frac{\delta^{\odot 2}}{2!} \\
  \vdots\\
  \frac{\delta^{\odot k}}{k!}\\
\end{array}
\right).\]

Let us consider the first matrix representation. The number of equations (rows) in the first matrix representation is equal to \[\left(\begin{array}{c}
  m-1+0 \\
  m-1 \\
\end{array}\right)+\left(\begin{array}{c}
  m-1+1 \\
  m-1 \\
\end{array}\right)+...+\left(\begin{array}{c}
  m-1+k \\
  m-1 \\
\end{array}\right)=\left(\begin{array}{c}
  m+k \\
  m \\
\end{array}\right).\]

The number of columns of $\mathbf{x}^{\odot l}$ is $\left(\begin{array}{c}
  n-1+l \\
  n-1 \\
\end{array}\right)$. Therefore whenever $1\leq l^k_{1}< l^k_{2}<...< l^k_{j_k}$ are such that
\[\left(\begin{array}{c}
  m+k \\
  m \\
\end{array}\right)=\sum_{i=1}^{j_k}\left(\begin{array}{c}
  n-1+l^k_i \\
  n-1 \\
\end{array}\right),\] which seems to happen for infinitely many $k$, due to Proposition 2.3 one has the matrix equality
\[\left(\left(
\begin{array}{c}
1\\
  \partial\\
  \frac{\partial^{\odot 2}}{2!} \\
  \vdots\\
  \frac{\partial^\odot k}{k!}\\
\end{array}
\right)\odot (\frac{\mathbf{x}^{\odot j_1}}{j_1!}, \frac{\mathbf{x}^{\odot j_2}}{j_2!},..., \frac{\mathbf{x}^{\odot j_k}}{j_k!})\right)Diag(\frac{h^{\odot j_1}}{j_1!},\frac{h^{\odot j_2}}{j_2!}, ...,\frac{h^{\odot j_k}}{j_k!})=\]
\[\left(
\begin{array}{cccccc}
1&0&0&0&\cdots &0\\
0&  g & 0 & 0 &\cdots & 0\\
0&\frac{\partial\odot g}{2!}& \frac{g^{\odot 2}}{2!}&0&\cdots&0\\
\vdots&  \vdots&\vdots&\vdots&\cdots&0\\
0&\frac{\partial^{\odot k-1}\odot g}{k!}& *&*&\cdots& \frac{g^{\odot k}}{k!}\\
\end{array}
\right)
\left(\left(
\begin{array}{c}
1\\
  \delta\\
  \frac{\delta^{\odot 2}}{2!}\\
  \vdots\\
  \frac{\delta^{\odot k}}{k!}\\
\end{array}
\right)\odot (\frac{\mathbf{y}^{\odot j_1}}{j_1!}, \frac{\mathbf{y}^{\odot j_2}}{j_2!},..., \frac{\mathbf{y}^{\odot j_k}}{j_k!})\right),\] where $\mathbf{y}=\mathbf{x}h$.

Taking the determinant of both sides of this equality results in
\[\det(h)^{\sum_{i=1}^{j_k}\left(
\begin{array}{c}
n+j_i-1\\
  n\\
  \end{array}
\right)}f^{\partial}_k\{\mathbf{x}\}=\det(g)^{\left(\begin{array}{c}
  m+k \\
  m+1 \\
\end{array}\right)}f^{\delta}_k\{\mathbf{y}\},\] where $f^{\partial}_k\{\mathbf{\mathbf{x}}\}$ stands for the $\det\left(
\begin{array}{c}
X\\
  \partial\odot X\\
  \frac{\partial^{\odot 2}}{2!}\odot X \\
  \vdots\\
  \frac{\partial^{\odot k}}{k!}\odot X\\
\end{array}\right)$, $X=(\frac{\mathbf{x}^{\odot j_1}}{j_1!}, \frac{\mathbf{x}^{\odot j_2}}{j_2!},..., \frac{\mathbf{x}^{\odot j_k}}{j_k!}),$ which means that $f^{\partial}_k\{\mathbf{x}\}$ is a relative invariant.

Let $f^{\partial}_{k_1}\{\mathbf{x}\}, f^{\partial}_{k_2}\{\mathbf{x}\},...,f^{\partial}_{k_{m+1}}\{\mathbf{x}\}$ be such relative invariants. By the use of them one can construct a matrix
$M^{\partial}\langle \mathbf{x}\rangle$ consisting of rows
\[M_i^{\partial}\langle \mathbf{x}\rangle=\frac{-\partial\odot f_{k_{i+1}}^{\partial}\langle \mathbf{x}\rangle}{\left(\begin{array}{c}
  m+k_{i+1} \\
  m+1 \\
\end{array}\right)f_{k_{i+1}}^{\partial}\langle \mathbf{x}\rangle}+\frac{\partial\odot f_{k_1}^{\partial}\langle \mathbf{x}\rangle}{\left(\begin{array}{c}
  m+k_1 \\
  m+1 \\
\end{array}\right)f_{k_1}^{\partial}\langle \mathbf{x}\rangle}.\] According to the construction the obtained matrix $M^{\partial}\langle \mathbf{x}\rangle$ is a nonsingular
matrix for which equality $(3)$ holds.

\begin{ex}
Now let us consider two dimensional surfaces in $\mathbb{R}^3$ and $\mathbb{R}^4$.

In $m=2$ case one has \begin{eqnarray}\left(\begin{array}{c}
  m+k \\
  m \\
\end{array}\right)_{k=1,2,3,...}=\{3, 6, 10, 15, 21, 22, 36, 45, 55, ...\}.\end{eqnarray}

A. For $n=3$ one has the sequence \begin{eqnarray}\left(\begin{array}{c}
  n-1+i \\
  n-1 \\
\end{array}\right)_{i=1,2,3,...}=\{3, 6, 10, 15, 21, 28, 36, 45, 55, ...\}\end{eqnarray}

Representing the elements of sequence $(4)$ as the sums of different elements of sequence $(5)$, when it is possible, one has:

1. $3=3$, which implies that $k=1, j^1_1=1$ and for $f^{\partial}_1\{x\}= \det\left(
\begin{array}{c}
\mathbf{x}\\
  \partial\odot \mathbf{x}\\
  \end{array}
\right)$ the equality  $f^{\delta}_1\{\mathbf{y}\}=\det(h)\det(g)^{-1}f^{\partial}_1\{\mathbf{x}\}$ holds.

2. $6=6$, which implies that $k=2, j^2_1=2$ and for \[f^{\partial}_2\{\mathbf{x}\}= \det\left(
\begin{array}{c}
\frac{\mathbf{x}^{\odot 2}}{2!}\\
\partial\odot \frac{\mathbf{x}^{\odot 2}}{2!} \\
 \frac{\partial^{\odot 2}}{2!}\odot\frac{\mathbf{x}^{\odot 2}}{2!}\\
  \end{array}
\right)\] the equality  $f^{\delta}_2\{\mathbf{y}\}=\det(h)^4\det(g)^{-4}f^{\partial}_2\{\mathbf{x}\}$ holds.

3. $10=10$, which implies that $k=3, j^3_1=3$ and for $f^{\partial}_3\{\mathbf{x}\}= \det\left(
\begin{array}{c}
\frac{\mathbf{x}^{\odot 3}}{3!}\\
  \partial\odot \frac{\mathbf{x}^{\odot 3}}{3!}\\
  \frac{\partial^{\odot 2}}{2!}\odot\frac{\mathbf{x}^{\odot 3}}{3!} \\
    \frac{\partial^\odot 3}{3!}\odot\frac{\mathbf{x}^{\odot 3}}{3!}\\ \end{array}
\right)$  the equality  $f^{\delta}_3\{\mathbf{y}\}=\det(h)^{10}\det(g)^{-10}f^{\partial}_3\{\mathbf{x}\}$ holds.

As we have noticed earlier one can use them to construct column vectors \[M_1^{\partial}\langle \mathbf{x}\rangle=\frac{\partial\odot f_2^{\partial}\langle \mathbf{x}\rangle}{-4f_2^{\partial}\langle \mathbf{x}\rangle}-\frac{\partial\odot f_1^{\partial}\langle \mathbf{x}\rangle}{-f_1^{\partial}\langle \mathbf{x}\rangle},\ \ \
M_2^{\partial}\langle \mathbf{x}\rangle=\frac{\partial\odot f_3^{\partial}\langle \mathbf{x}\rangle}{-10f_3^{\partial}\langle \mathbf{x}\rangle}-\frac{\partial\odot f_1^{\partial}\langle \mathbf{x}\rangle}{-f_1^{\partial}\langle \mathbf{x}\rangle},\] and for the second order square matrix $M^{\partial}\langle \mathbf{x}\rangle$ consisting of the columns $M_1^{\partial}\langle \mathbf{x}\rangle, M_2^{\partial}\langle \mathbf{x}\rangle$
the equality \[M^{g^{-1}{\partial}}\langle \mathbf{x}h\rangle =
g^{-1}M^{\partial}\langle \mathbf{x}\rangle,\] holds.

Note that in this particular case for $M^{\partial}\langle \mathbf{x}\rangle$ one can take the matrix $\left(\partial\odot\frac{f^{\partial}_2\{\mathbf{x}\}}{f^{\partial}_1\{\mathbf{x}\}^4},
\partial\odot\frac{f^{\partial}_3\{\mathbf{x}\}}{f^{\partial}_1\{\mathbf{x}\}^{10}}\right)$ as well.

 A'. If for the same purpose one uses the second matrix representation then for the number of equations (rows) in it one gets
\[\left(\begin{array}{c}
  m-1+1 \\
  m-1 \\
\end{array}\right)+...+\left(\begin{array}{c}
  m-1+k \\
  m-1 \\
\end{array}\right)=\left(\begin{array}{c}
  m+k \\
  m \\
\end{array}\right) -1\]
and
\begin{eqnarray} \left(\left(\begin{array}{c}
  m+k \\
  m \\
\end{array}\right)-1\right)_{k=1,2,3,...}=\{2, 5, 9, 14, 20, 27, 35, 44, 54, ...\}.\end{eqnarray}

Representing the elements of this sequence as the sums of different elements of sequence $(5)$, when it is possible, one has:

 1. $9=3+6$, which implies that $k=3, j^3_1=1, j^3_2=2$ and for $f^{\partial}_3\{x\}= \det\left(
\begin{array}{c}
  \partial\odot (\mathbf{x};\frac{\mathbf{x}^{\odot 2}}{2!})\\
  \frac{\partial^{\odot 2}}{2!}\odot (\mathbf{x};\frac{\mathbf{x}^{\odot 2}}{2!}) \\
    \frac{\partial^{\odot 3}}{3!}\odot (\mathbf{x};\frac{\mathbf{x}^{\odot 2}}{2!})\\
\end{array}
\right)$ the equality  $f^{\delta}_3\{\mathbf{y}\}=\det(h)^5\det(g)^{-10}f^{\partial}_3\{\mathbf{x}\}$ holds.

2. $27=6+21$, which implies that $k=6, j^6_1=2, j^6_2=5$ and for \[f^{\partial}_6\{\mathbf{x}\}= \det\left(
\begin{array}{c}
  \partial\odot (\frac{\mathbf{x}^{\odot 2}}{2!}, \frac{\mathbf{x}^{\odot 5}}{5!})\\
  \frac{\partial^{\odot 2}}{2!}\odot (\frac{\mathbf{x}^{\odot 2}}{2!}, \frac{\mathbf{x}^{\odot 5}}{5!})\\
    \frac{\partial^{\odot 3}}{3!}\odot (\frac{\mathbf{x}^{\odot 2}}{2!}, \frac{\mathbf{x}^{\odot 5}}{5!})\\
    \frac{\partial^{\odot 4}}{4!}\odot (\frac{\mathbf{x}^{\odot 2}}{2!}, \frac{\mathbf{x}^{\odot 5}}{5!})\\
    \frac{\partial^{\odot 5}}{5!}\odot (\frac{\mathbf{x}^{\odot 2}}{2!}, \frac{\mathbf{x}^{\odot 5}}{5!})\\
    \frac{\partial^{\odot 6}}{6!}\odot (\frac{\mathbf{x}^{\odot 2}}{2!}, \frac{\mathbf{x}^{\odot 5}}{5!})\\
\end{array}
\right)\] the equality  $f^{\delta}_6\{\mathbf{y}\}=\det(h)^{39}\det(g)^{-56}f^{\partial}_6\{\mathbf{x}\}$ holds.

3. $44=6+10+28$, which implies that $k=8, j^8_1=1, j^8_2=3, j^8_3=6$ and for $f^{\partial}_8\{\mathbf{x}\}$ the equality  $f^{\delta}_8\{\mathbf{y}\}=\det(h)^{67}\det(g)^{-120}f^{\partial}_8\{\mathbf{x}\}$ holds.

So in this case one has column vectors \[M_1^{\partial}\langle \mathbf{x}\rangle=\frac{\partial\odot f_6^{\partial}\langle \mathbf{x}\rangle}{-56f_6^{\partial}\langle \mathbf{x}\rangle}-\frac{\partial\odot f_3^{\partial}\langle \mathbf{x}\rangle}{-10f_3^{\partial}\langle \mathbf{x}\rangle},\ \ \
M_2^{\partial}\langle \mathbf{x}\rangle=\frac{\partial\odot f_8^{\partial}\langle \mathbf{x}\rangle}{-120f_8^{\partial}\langle \mathbf{x}\rangle}-\frac{\partial\odot f_3^{\partial}\langle \mathbf{x}\rangle}{-10f_3^{\partial}\langle \mathbf{x}\rangle},\] and for the second order square matrix $M^{\partial}\langle \mathbf{x}\rangle$ consisting of these columns
equality $(3)$ also holds.

 B. For $n=4$ one has the sequence \begin{eqnarray}\left(\begin{array}{c}
  n-1+i \\
  n-1 \\
\end{array}\right)_{i=1,2,3,...}=\{4, 10, 20, 35, 56, ...\}.\end{eqnarray}

Representing the elements of sequence $(4)$ as the sums of different elements of this sequence, when it is possible, one has:

1. $10=10$, which implies that $k=3, j^3_1=2$ and for $f^{\partial}_3\{\mathbf{x}\}= \det\left(
\begin{array}{c}
\frac{\mathbf{x}^{\odot 2}}{2!}\\
  \partial\odot \frac{\mathbf{x}^{\odot 2}}{2!}\\
  \frac{\partial^{\odot 2}}{2!}\odot\frac{\mathbf{x}^{\odot 2}}{2} \\
    \frac{\partial^\odot 3}{3!}\odot\frac{\mathbf{x}^{\odot 2}}{2}\\
   \end{array}
\right)$ the equality  $f^{\delta}_3\{\mathbf{y}\}=\det(h)^5\det(g)^{-10}f^{\partial}_3\{\mathbf{x}\}$ holds.

2. $45=10+35$, which implies that $k=8, j^8_1=2, j^8_2=4 $ and for \[f^{\partial}_8\{\mathbf{x}\}=\det\left(
\begin{array}{c}
(\frac{\mathbf{x}^{\odot 2}}{2},\frac{\mathbf{x}^{\odot 4}}{4!})\\
  \partial\odot (\frac{\mathbf{x}^{\odot 2}}{2},\frac{\mathbf{x}^{\odot 4}}{4!})\\
  \frac{\partial^{\odot 2}}{2!}\odot (\frac{\mathbf{x}^{\odot 2}}{2},\frac{\mathbf{x}^{\odot 4}}{4!})\\
    \frac{\partial^{\odot 3}}{3!}\odot (\frac{\mathbf{x}^{\odot 2}}{2},\frac{\mathbf{x}^{\odot 4}}{4!})\\
    \frac{\partial^{\odot 4}}{4!}\odot (\frac{\mathbf{x}^{\odot 2}}{2},\frac{\mathbf{x}^{\odot 4}}{4!})\\
    \frac{\partial^{\odot 5}}{5!}\odot (\frac{\mathbf{x}^{\odot 2}}{2},\frac{\mathbf{x}^{\odot 4}}{4!})\\
     \frac{\partial^{\odot 6}}{6!}\odot (\frac{\mathbf{x}^{\odot 2}}{2},\frac{\mathbf{x}^{\odot 4}}{4!})\\
      \frac{\partial^{\odot 7}}{7!}\odot (\frac{\mathbf{x}^{\odot 2}}{2},\frac{\mathbf{x}^{\odot 4}}{4!})\\
       \frac{\partial^{\odot 8}}{8!}\odot (\frac{\mathbf{x}^{\odot 2}}{2},\frac{\mathbf{x}^{\odot 4}}{4!})\\
    \end{array}
\right)\] the equality  $f^{\delta}_8\{\mathbf{y}\}=\det(h)^{40}\det(g)^{-120}f^{\partial}_8\{\mathbf{x}\}$ holds.

3. $55=20+35$, which implies that $k=9, j^9_1=3, j^9_2=4 $ and for $f^{\partial}_9\{\mathbf{x}\}$ the equality  $f^{\delta}_9\{\mathbf{y}\}=\det(h)^{50}\det(g)^{-165}f^{\partial}_9\{\mathbf{x}\}$ holds.

So for the second order square matrix $M^{\partial}\langle \mathbf{x}\rangle$ consisting of the columns $M_1^{\partial}\langle \mathbf{x}\rangle, M_2^{\partial}\langle\mathbf{x}\rangle$
equality $(3)$ is valid, where \[M_1^{\partial}\langle \mathbf{x}\rangle=\frac{\partial\odot f_8^{\partial}\langle \mathbf{x}\rangle}{-120f_8^{\partial}\langle \mathbf{x}\rangle}-\frac{\partial\odot f_3^{\partial}\langle \mathbf{x}\rangle}{-10f_3^{\partial}\langle \mathbf{x}\rangle},\ \ \
M_2^{\partial}\langle \mathbf{x}\rangle=\frac{\partial\odot f_9^{\partial}\langle \mathbf{x}\rangle}{-165f_9^{\partial}\langle \mathbf{x}\rangle}-\frac{\partial\odot f_3^{\partial}\langle \mathbf{x}\rangle}{-10f_3^{\partial}\langle \mathbf{x}\rangle}.\]

B'. If one uses for this purpose the second matrix representation then representing the  elements of sequence $(6)$ as the sums of different elements of sequence $(7)$, when it is possible, one has:

1. $14=4 +10$, which implies that $k=4, j^4_1=1, j^4_2=2$ and for $f^{\partial}_4\{\mathbf{x}\}= \det\left(
\begin{array}{c}
  \partial\odot (\mathbf{x}, \frac{\mathbf{x}^{\odot 2}}{2!})\\
  \frac{\partial^{\odot 2}}{2!}\odot (\mathbf{x}, \frac{\mathbf{x}^{\odot 2}}{2!}) \\
     \end{array}
\right)$ the equality  $f^{\delta}_4\{\mathbf{y}\}=\det(h)^6\det(g)^{-20}f^{\partial}_4\{\mathbf{x}\}$ holds.

2. $20=20$, which implies that $k=5, j^5_1=3 $ and for \[f^{\partial}_8\{\mathbf{x}\}=\det\left(
\begin{array}{c}
  \partial\odot \frac{\mathbf{x}^{\odot 3}}{3}\\
  \frac{\partial^{\odot 2}}{2!}\odot \frac{\mathbf{x}^{\odot 3}}{3}\\
    \frac{\partial^{\odot 3}}{3!}\odot \frac{\mathbf{x}^{\odot 3}}{3}\\
    \frac{\partial^{\odot 4}}{4!}\odot \frac{\mathbf{x}^{\odot 3}}{3}\\
    \frac{\partial^{\odot 5}}{5!}\odot \frac{\mathbf{x}^{\odot 3}}{3}\\
     \end{array}
\right)\] the equality  $f^{\delta}_5\{\mathbf{y}\}=\det(h)^{15}\det(g)^{-35}f^{\partial}_5\{\mathbf{x}\}$ holds.

3. $35=35$, which implies that $k=7, j^7_1=4$ and for $f^{\partial}_7\{\mathbf{x}\}$ the equality  $f^{\delta}_7\{\mathbf{y}\}=\det(h)^{35}\det(g)^{-84}f^{\partial}_7\{\mathbf{x}\}$ holds.

So for the second order square matrix $M^{\partial}\langle \mathbf{x}\rangle$ consisting of the columns $M_1^{\partial}\langle \mathbf{x}\rangle, M_2^{\partial}\langle\mathbf{x}\rangle$
equality $(3)$ is valid, where \[M_1^{\partial}\langle \mathbf{x}\rangle=\frac{\partial\odot f_5^{\partial}\langle \mathbf{x}\rangle}{-35f_5^{\partial}\langle \mathbf{x}\rangle}-\frac{\partial\odot f_3^{\partial}\langle \mathbf{x}\rangle}{-20f_3^{\partial}\langle \mathbf{x}\rangle},\ \ \
M_2^{\partial}\langle \mathbf{x}\rangle=\frac{\partial\odot f_7^{\partial}\langle \mathbf{x}\rangle}{-84f_7^{\partial}\langle \mathbf{x}\rangle}-\frac{\partial\odot f_3^{\partial}\langle \mathbf{x}\rangle}{-20f_3^{\partial}\langle \mathbf{x}\rangle}.\]
\end{ex}

Due to the construction we would like to formulate the following combinatorial question.

\begin{que} Let $m, n$ be any fixed natural numbers. Is it true that infinitely many elements of the sequence $\left(\begin{array}{c}
  m+k \\
  m \\
\end{array}\right)_{k=1,2,3,...}$ ( as well as of the sequence $(\left(\begin{array}{c}
  m+k \\
  m \\
\end{array}\right)-1)_{k=1,2,3,...})$ are representable as the sums of different elements of the sequence $\left(\begin{array}{c}
  n+k \\
  n \\
\end{array}\right)_{k=1,2,3,...}$ \emph{?}\end{que}

\end{document}